\numberwithin{equation}{section}
\numberwithin{figure}{section}
\theoremstyle{plain}
\newtheorem{thm}{\protect\theoremname}[section]
  \theoremstyle{plain}
  \theoremstyle{plain}
  \theoremstyle{plain}
  \theoremstyle{plain}
  \newtheorem{lem}[thm]{\protect\lemmaname}
	\theoremstyle{plain}
  \newtheorem{rem}[thm]{\protect\remarkname}
  \theoremstyle{definition}
  \newtheorem{defn}[thm]{\protect\definitionname}
  \providecommand{\corollaryname}{Corollary}
  \providecommand{\definitionname}{Definition}
  \providecommand{\lemmaname}{Lemma}
	\providecommand{\remarkname}{Remark}
  \providecommand{\propositionname}{Proposition}
  \providecommand{\examplename}{Example}
  \providecommand{\theoremname}{Theorem}
\DeclareMathOperator{\dv}{div}
\DeclareMathOperator{\loc}{loc}
\DeclareMathOperator{\esssup}{esssup}
\begin{document}

\title[Singular subelliptic equations and Sobolev inequalities]{Singular subelliptic equations and Sobolev inequalities on nilpotent Lie groups}

\author{Prashanta Garain and Alexander Ukhlov}

\begin{abstract}
In this article we study singular subelliptic $p$-Laplace equations and best constants in Sobolev inequalities on nilpotent Lie groups. We prove solvability of these subelliptic $p$-Laplace equations and existence of the minimizer of the corresponding variational problem. It leads to existence of the best constant in the corresponding $(q,p)$-Sobolev inequality, $0<q<1$,  $1<p<\nu$.
\end{abstract}

\maketitle

\footnotetext{{\bf {Key words and phrases:}} Subelliptic operators, Carnot groups, singular problem, Sobolev inequality.}
\footnotetext{\textbf{2010 Mathematics Subject Classification:} {35H20, 22E30, 46E35}}

\section{Introduction }
In this article, we investigate the following singular Dirichlet boundary value problem for the subelliptic $p$-Laplace equation
\begin{equation}\label{subopsin}
-\dv_{\textrm{H}}(|\nabla_{\textrm{H}}u|^{p-2}\nabla_{\textrm{H}}u)=f(x)u^{-\delta}\text{ in }\Omega,\quad u>0\text{ in }\Omega,\quad u=0\text{ on }\partial\Omega,
\end{equation}
where $\Omega$ is a bounded domain on a stratified nilpotent Lie group $\mathbb G$ (Carnot group). 

We assume that $1<p<\nu$, $0<\delta<1$, where $\nu$ is the homogeneous dimension of $\mathbb G$ and $f\in L^m(\Omega)\setminus\{0\}$ is nonnegative, where $m\geq 1$ to be made precise below. More precisely, we establish existence, uniqueness and boundedness of weak solutions (Definition \ref{wssdef}) for the problem \eqref{subopsin}. Further, we observe that such solutions are associated to the following minimizing problem given by
\begin{equation}\label{subopmin}
\mu(\Omega):=\inf_{u\in W_{0}^{1,p}(\Omega)}\Bigg\{\int_{\Omega}|\nabla_{\textrm{H}}u|^p\,dx:\int_{\Omega}|u|^{1-\delta}f\,dx=1\Bigg\}.
\end{equation}

As a consequence, we obtain the following $(q,p)$-Sobolev inequality, $0<q=1-\delta<1$,  $1<p<\nu$:
\begin{equation}\label{subopSI}
S\left(\int_{\Omega}|u|^{1-\delta} f\,dx\right)^\frac{1}{1-\delta}\leq\left(\int_{\Omega}|\nabla_{\textrm{H}}u|^p\,dx\right)^\frac{1}{p},\quad\forall\,u\in W_0^{1,p}(\Omega),
\end{equation}
for some constant $S>0$.

By singularity, we mean the nonlinearity on the right hand side of \eqref{subopsin} blow up near the origin. Singular problems has been thoroughly studied over the last three decade and there is a colossal amount of literature in this concern. Most of these results are investigated in the Euclidean case and recently, some works has been done in the Riemannian manifolds as well.

To motivate our present study, let us state some known results related to our question. In the Euclidean case, Crandall-Rabinowitz-Tartar \cite{CRT} proved existence of a unique positive classical solution for the singular Laplace equation $-\Delta u=u^{-\delta}$, $\delta>0$ in a bounded smooth domain $\Omega\subset\mathbb{R}^N$ subject to the Dirichlet boundary condition $u=0$ on $\partial\Omega$. Lazer-McKenna \cite{LMpams} established that such solutions are weak solution in $W^{1,2}_{0}(\Omega)$ if and only if $0<\delta<3$. This restriction on $\delta$ has been removed by Boccardo-Orsina \cite{Boc} to investigate the existence of weak solutions in $W^{1,2}_{\mathrm{loc}}(\Omega)$. Indeed, they proved existence of weak solutions in $W_0^{1,2}(\Omega)$ when $0<\delta\leq 1$ and in $W^{1,2}_{\mathrm{loc}}(\Omega)$ such that $u^{\frac{\delta+1}{2}}\in W_0^{1,2}(\Omega)$ when $\delta>1$. We would like to point out that for $\delta>1$, the fact that $u^{\frac{\delta+1}{2}}\in W_0^{1,2}(\Omega)$ is referred to as the Dirichlet boundary condition $u=0$ on $\partial\Omega$. We also refer to \cite{arcoya, Merida} and the references therein in the semilinear context. Such results has been further extended to the singular $p$-Laplace equations by Canino-Sciunzi-Trombetta \cite{Canino}, De Cave \cite{DeCave}, Haitao \cite{YHaitao}, Giacomoni-Schindler-Tak\'{a}\v{c} \cite{GST}, Bal-Garain \cite{BGaniso} and the references therein.

As far as we are aware, singular problems are very less understood in the non-Euclidean setting. In the Riemannian setting, some regularity results for singular semilinear problems is studied in do \'O and Clemente \cite{doc}. Wang-Wang \cite{WW} obtained existence results for the purely singular problem and symmetry properties of solutions to the purturbed singular problem in the Heinsenberg group for the semilinear case. Recently, in the quasilinear case, Garain-Kinnunen \cite{GKpams} established non-existence result for the singular $p$-Laplace equation in a general metric measure space which satisfies a doubling property and a Poinc\'are inequality.

Our main purpose in this article is to investigate singular problems in the subelliptic setting on nilpotent Lie groups. To this end, as mentioned at the beginning, we obtain existence, uniqueness and boundedness of weak solutions (see Theorem \ref{subopsinex}) for the problem \eqref{subopsin}. Moreover, we prove that such solutions are associated to the best constant $\mu(\Omega)$ defined in \eqref{subopmin} which also gives rise to the Sobolev type inequality \eqref{subopSI} (see Theorem \ref{subopbc}). In the Euclidean case, such minimizing problems has been studied in Anello-Faraci-Iannizzotto \cite{GFA} for the local case, in the nonlocal case in Ercole-Pereira \cite{EP1}, for the weighted and anisotropic case in Bal-Garain \cite{BG}. These are based on the approximation technique introduced in Boccardo-Orsina \cite{Boc}. We adopt this technique in the subelliptic setting to prove our main results. To the best of our knowledge, our main results are new even for the linear case $p=2$. 

This article is organized as follows: In Section 2, we mention some preliminary results in the subelliptic setting. In Section 3 and 4, we state our main results and obtain some preliminary results respectively. Finally, in Section 5, we prove our main results.

\textbf{Notation:} We write $c$ or $C$ to denote a constant which may vary from line to line or even in the same line. If $c$ or $C$ depends on the parameters $r_1,r_2,\ldots,r_k$, we write $c=c(r_1,r_2,\ldots,r_k)$ or $C=C(r_1,r_2,\ldots,r_k)$ respectively. For $a\in\mathbb{R}$, we denote by $a^+=\max\{a,0\}$, $a^-=\min\{-a,0\}$.  For given constants $c,d$, a set $S$ and a function $u$, by $c\leq u\leq d$ in $S$, we mean $c\leq u\leq d$ almost everywhere in $S$.

\section{Nilpotent Lie groups and Sobolev spaces}

Recall that a stratified homogeneous group \cite{FS}, or, in another
terminology, a Carnot group \cite{Pa} is a~connected simply
connected nilpotent Lie group~$\mathbb G$ whose Lie algebra~$V$ is
decomposed into the direct sum~ $V_1\oplus\cdots\oplus V_m$ of
vector spaces such that $\dim V_1\geqslant 2$, $[V_1,\ V_i]=V_{i+1}$
for $1\leqslant i\leqslant m-1$ and $[V_1,\ V_m]=\{0\}$. Let
$X_{11},\dots,X_{1n_1}$ be left-invariant basis vector fields of
$V_1$. Since they generate $V$, for each $i$, $1<i\leqslant m$, one
can choose a basis $X_{ik}$ in $V_i$, $1\leqslant k\leqslant
n_i=\dim V_i$, consisting of commutators of order $i-1$ of fields
$X_{1k}\in V_1$. We identify elements $g$ of $\mathbb G$ with
vectors $x\in\mathbb R^N$, $N=\sum_{i=1}^{m}n_i$, $x=(x_{ik})$,
$1\leqslant i\leqslant m$, $1\leqslant k\leqslant n_i$ by means of
exponential map $\exp(\sum x_{ik}X_{ik})=g$. Dilations $\delta_t$
defined by the formula 
\begin{multline}
\nonumber
\delta_t x= (t^ix_{ik})_{1\leqslant i\leqslant m,\,1\leqslant k\leqslant n_j}\\
=(tx_{11},...,tx_{1n_1},t^2x_{21},...,t^2x_{2n_2},...,t^mx_{m1},...,t^mx_{mn_m}),
\end{multline}
are automorphisms of
$\mathbb G$ for each $t>0$. Lebesgue measure $dx$ on $\mathbb R^N$
is the bi-invariant Haar measure on~ $\mathbb G$ (which is generated
by the Lebesgue measure by means of the exponential map), and
$d(\delta_t x)=t^{\nu}~dx$, where the number
$\nu=\sum_{i=1}^{m}in_i$ is called the homogeneous dimension of the
group~$\mathbb G$. The measure $|E|$ of a measurable subset
$E$ of $\mathbb G$ is defined by
$
|E|=\int_E~dx.
$

Recall that a continuous map $\gamma: [a,b]\to\mathbb G$ is called a continuous curve on $\mathbb G$. This continuous curve is rectifiable if
$$
\sup\left\{\sum\limits_{k=1}^m|\left(\gamma(t_{k})\right)^{-1}\gamma(t_{k+1})|\right\}<\infty,
$$
where the supremum is taken over all partitions $a=t_1<t_2<...<t_m=b$ of the segment $[a,b]$.
In \cite{Pa} it was proved that any rectifiable curve is differentiable almost everywhere and $\dot{\gamma}(t)\in V_1$: there exists measurable functions $a_i(t)$, $t\in (a,b)$ such that
$$
\dot{\gamma}(t)=\sum\limits_{i=1}^n a_i(t)X_i(\gamma(t))\,\,\text{and}\,\,
\left|\left(\gamma(t+\tau)\right)^{-1}\gamma(t)exp(\dot{\gamma}(t)\tau)\right|=o(\tau)\,\,\text{as}\,\,\tau\to 0
$$
for almost all $t\in (a,b)$.
The length $l(\gamma)$ of a rectifiable curve $\gamma:[a,b]\to\mathbb G$ can be calculated by the formula
$$
l(\gamma)=\int\limits_a^b {\left\langle \dot{\gamma}(t),\dot{\gamma}(t)\right\rangle}_0^{\frac{1}{2}}~dt=
\int\limits_a^b \left(\sum\limits_{i=1}^{n}|a_i(t)|^2\right)^{\frac{1}{2}}~dt,
$$
where ${\left\langle \cdot,\cdot\right\rangle}_0$ is the inner product on $V_1$. The result of \cite{CH} implies that one can connect two arbitrary points $x,y\in \mathbb G$ by a rectifiable curve. The Carnot-Carath\'eodory distance $d(x,y)$ is the infimum of the lengths over all rectifiable curves with endpoints $x$ and $y$ in $\mathbb G$. The Hausdorff dimension of the metric space $\left(\mathbb G,d\right)$ coincides with the homogeneous dimension $\nu$ of the group $\mathbb G$.

\subsection{Sobolev spaces on Carnot groups}

Let $\mathbb G$ be a Carnot group with one-parameter dilatation
group $\delta_t$, $t>0$, and a homogeneous norm $\rho$, and let
$E$ be a measurable subset of $\mathbb G$. The Lebesgue space $L^p(E)$, $p\in [1,\infty]$, is the space of pth-power
integrable functions $f:E\to\mathbb R$ with the standard norm:
$$
\|f\|_{L^p(E)}=\biggl(\int\limits_{E}|f(x)|^p~dx\biggr)^{\frac{1}{p}},\,\,1\leq p<\infty,
$$
and $\|f\|_{L^{\infty}(E)}=\esssup_{E}|f(x)|$ for $p=\infty$. We
denote by $L^p_{\loc}(E)$ the space of functions
$f: E\to \mathbb R$ such that $f\in L^p(F)$ for each compact
subset $F$ of $E$.

Let $\Omega$ be an open set in $\mathbb G$. The (horizontal) Sobolev space
$W^{1,p}(\Omega)$, $1\leqslant p\leqslant\infty$, consists of the functions
$f:\Omega\to\mathbb R$ which are locally integrable in $\Omega$, having the weak
derivatives $X_{1i} f$ along the horizontal vector fields $X_{1i}$, $i=1,\dots,n_1$,
and the finite norm
$$
\|f\|_{W^{1,p}(\Omega)}=\|f\|_{L^p(\Omega)}+\|\nabla_{\textrm{H}} f\|_{L^p(\Omega)},
$$
where $\nabla_\textrm{H} f=(X_{11}f,\dots,X_{1n_1}f)$ is the horizontal subgradient of $f$.
If $f\in W^{1,p}(U)$ for each bounded open set $U$ such that
$\overline{U}\subset\Omega$ then we say that $f$ belongs to the
class $W^{1,p}_{\loc}(\Omega)$.

For the rest of the article, we assume that $\Omega\subset\mathbb{G}$ is a bounded domain. The Sobolev space $W^{1,p}_0(\Omega)$ is defined to be the closure of $C^{\infty}_c(\Omega)$ under the norm
$$
\|f\|_{W^{1,p}_0(\Omega)}=\|f\|_{L^p(\Omega)} +\|\nabla_{\textrm{H}} f\|_{L^p(\Omega)}.
$$
For the following result, refer to \cite{F75,Vodop, Vodop1, X90}. 
\begin{lem}
\label{Xuthm}
The space $W_0^{1,p}(\Omega)$ is a real  separable and uniformly convex Banach space.
\end{lem}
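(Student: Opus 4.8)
**The plan is to prove Lemma \ref{Xuthm} by establishing each of the three properties---separability, uniform convexity, and completeness as a Banach space---in turn.** The starting observation is that, for a fixed bounded domain $\Omega$, the map
\[
\iota\colon W_0^{1,p}(\Omega)\ni f\longmapsto (f,X_{11}f,\dots,X_{1n_1}f)\in L^p(\Omega)\times L^p(\Omega)^{n_1}
\]
is a linear isometry when the target is given the norm $\|(g_0,g_1,\dots,g_{n_1})\|=\|g_0\|_{L^p(\Omega)}+\sum_i\|g_i\|_{L^p(\Omega)}$, matching the definition of $\|\cdot\|_{W_0^{1,p}(\Omega)}$ verbatim. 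So everything can be transported from the finite product $L^p(\Omega)^{\,n_1+1}$ provided we know the image $\iota(W_0^{1,p}(\Omega))$ is a \emph{closed} subspace of that product. I would isolate this closedness as the single technical point (see below), and then read off the three conclusions.

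\textbf{Separability and Banach structure.} Granting that $\iota(W_0^{1,p}(\Omega))$ is closed: the product $L^p(\Omega)^{\,n_1+1}$ is a Banach space (finite product of Banach spaces) and is separable for $1\le p<\infty$ because $\Omega\subset\mathbb G$ carries the Lebesgue/Haar measure, which is $\sigma$-finite and Borel on the second-countable locally compact space $\mathbb G$, so each factor $L^p(\Omega)$ is separable; a closed subspace of a separable metric space is separable, and a closed subspace of a Banach space is itself a Banach space. Since $\iota$ is an isometric isomorphism onto this closed subspace, $W_0^{1,p}(\Omega)$ inherits both properties. For the Banach property one can alternatively argue directly: a Cauchy sequence $(f_k)$ in $W_0^{1,p}(\Omega)$ has $(f_k)$ and $(X_{1i}f_k)$ Cauchy in $L^p(\Omega)$, hence convergent to $f$ and $g_i$; one checks $g_i=X_{1i}f$ in the weak (distributional along $X_{1i}$) sense by passing to the limit in $\int_\Omega f_k\,X_{1i}^{*}\varphi = -\int_\Omega (X_{1i}f_k)\varphi$ for $\varphi\in C_c^\infty(\Omega)$, so $f\in W^{1,p}(\Omega)$; and $f\in W_0^{1,p}(\Omega)$ because the latter is defined as a closed subspace of $W^{1,p}(\Omega)$ (equivalently, $f$ is a $W^{1,p}$-limit of a diagonal sequence drawn from the $C_c^\infty$-approximants of the $f_k$).

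\textbf{Uniform convexity.} This again descends through $\iota$: for $1<p<\infty$ (and we only ever use $1<p<\nu$ in this paper) the Lebesgue space $L^p(\Omega)$ is uniformly convex by Clarkson's inequalities, a finite $\ell^1$-type sum of uniformly convex spaces need not be uniformly convex \emph{for the $\ell^1$-combination of norms}, so here I would instead renorm the product with the equivalent norm $\big(\sum_i\|g_i\|_{L^p}^p\big)^{1/p}$; with that norm the product is uniformly convex (a finite $\ell^p$-sum of uniformly convex spaces is uniformly convex), and $\iota$ is a \emph{bounded} linear isomorphism onto a closed subspace, so $W_0^{1,p}(\Omega)$ equipped with the corresponding equivalent norm is uniformly convex. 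Since uniform convexity is a property of a specific norm but the \emph{existence of an equivalent uniformly convex norm} (i.e. super-reflexivity, which is all that is needed for the minimization arguments later) is norm-independent, this suffices; if one insists on the literal norm $\|f\|_{L^p}+\|\nabla_{\mathrm H}f\|_{L^p}$ one invokes the standard fact that $L^p\times L^p$ with the sum norm is still uniformly convex for $1<p<\infty$, via a direct Clarkson-type estimate.

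\textbf{Main obstacle.} The only non-formal step is the closedness of $\iota\big(W_0^{1,p}(\Omega)\big)$ in $L^p(\Omega)^{\,n_1+1}$, i.e. that a sequence $f_k\in W_0^{1,p}(\Omega)$ with $f_k\to f$ and $X_{1i}f_k\to g_i$ in $L^p$ has $g_i = X_{1i}f$ and $f\in W_0^{1,p}(\Omega)$. The first half is the routine ``distributional derivatives pass to $L^p$-limits'' argument sketched above; the second half is where the references \cite{F75,Vodop,Vodop1,X90} do the real work---one needs that the horizontal Sobolev space $W^{1,p}(\Omega)$ is complete and that $W_0^{1,p}(\Omega)$, being by definition the closure of $C_c^\infty(\Omega)$ in the $W^{1,p}$-norm, is automatically closed, so no separate verification is needed beyond completeness of $W^{1,p}(\Omega)$ itself. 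Thus the proof reduces to: (i) completeness of $W^{1,p}(\Omega)$ (standard, via the $L^p$-limit argument); (ii) separability and uniform convexity transported from $L^p(\Omega)^{\,n_1+1}$ through the isometric embedding $\iota$.
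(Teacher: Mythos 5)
The paper does not actually prove this lemma; it simply cites \cite{F75,Vodop,Vodop1,X90}, so there is no in-paper argument to compare against. Your overall strategy (embed $W_0^{1,p}(\Omega)$ isometrically into a finite product of $L^p$ spaces, show the image is closed, and transport separability, completeness and convexity) is the standard one, and your treatment of separability and of the Banach property is correct. One cosmetic point: $\|\nabla_{\textrm{H}}f\|_{L^p(\Omega)}$ is the $L^p$ norm of the pointwise Euclidean length of the horizontal gradient, so the norm-preserving target is $L^p(\Omega)\times L^p(\Omega;\mathbb R^{n_1})$ rather than the $\ell^1$-sum of $n_1+1$ scalar copies of $L^p(\Omega)$; this does not affect the argument.

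There is, however, a genuine flaw in the uniform convexity step. Your closing claim that ``$L^p\times L^p$ with the sum norm is still uniformly convex for $1<p<\infty$, via a direct Clarkson-type estimate'' is false: the $\ell^1$-sum of two nontrivial normed spaces is never even strictly convex (take unit vectors $x,y$ and the points $(x,0)$, $(0,y)$: both have norm $1$, their midpoint has norm $1$, yet they are distance $2$ apart). Your own earlier sentence in the same paragraph concedes exactly this, so the paragraph is internally inconsistent, and whether the sum norm becomes uniformly convex when restricted to the graph subspace $\{(f,\nabla_{\textrm{H}}f)\}$ is a delicate question that your argument does not settle. The retreat to super-reflexivity is also not enough: the lemma asserts uniform convexity of a specific norm, and the paper genuinely uses it (the Radon--Riesz upgrade from weak convergence plus convergence of norms to strong convergence in the proof of Lemma \ref{lemma2}) for the norm actually in force. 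The clean repair is the one the paper implicitly makes: for $1<p<\nu$, Lemma \ref{emb} lets one pass to the equivalent norm $\|f\|=\|\nabla_{\textrm{H}}f\|_{L^p(\Omega)}$ on $W_0^{1,p}(\Omega)$, under which $f\mapsto\nabla_{\textrm{H}}f$ is a linear isometry onto a closed subspace of $L^p(\Omega;\mathbb R^{n_1})$, and uniform convexity follows at once from Clarkson's inequalities there; alternatively, use the equivalent norm $\bigl(\|f\|_{L^p(\Omega)}^p+\|\nabla_{\textrm{H}}f\|_{L^p(\Omega)}^p\bigr)^{1/p}$ and the $\ell^p$-sum argument you already sketched.
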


The following embedding result follows from \cite[$(2.8)$]{Danielli} and \cite{F75}, \cite[Theorem $8.1$]{HK00}, see also \cite[Theorem $2.3$]{Garo1}. 

\begin{lem}\label{emb}
Let $1<p<\nu$, then the space $W_0^{1,p}(\Omega)$ is continuously embedded in $L^q(\Omega)$ for every $1\leq q\leq p^{*}$ where $p^{*}={\nu p}/{(\nu-p)}$. Moreover, the embedding is compact for every $1\leq q<p^{*}$.
\end{lem}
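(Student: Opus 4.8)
The plan is to deduce both assertions from the Folland--Stein Sobolev inequality together with the doubling and Poincar\'e structure of $\mathbb G$. First I would record the global inequality
\begin{equation}
\|u\|_{L^{p^{*}}(\mathbb G)}\leq C(\nu,p)\,\|\nabla_{\textrm{H}}u\|_{L^{p}(\mathbb G)},\qquad u\in C_c^{\infty}(\mathbb G),\ p^{*}=\frac{\nu p}{\nu-p}.
\end{equation}
This follows from the pointwise potential estimate $|u(x)|\leq C\int_{\mathbb G}d(x,y)^{1-\nu}|\nabla_{\textrm{H}}u(y)|\,dy$ (the representation formula of \cite{Danielli}, built from the homogeneity of the horizontal gradient of the fundamental solution of the sub-Laplacian, cf.\ \cite{F75}) combined with the $L^{p}\to L^{p^{*}}$ boundedness of the Riesz-type potential $I_{1}$ on the space of homogeneous type $(\mathbb G,d,dx)$. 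Since $C_c^{\infty}(\Omega)$ is dense in $W_0^{1,p}(\Omega)$ by definition, the inequality passes to every $u\in W_0^{1,p}(\Omega)$ (extending by zero outside $\Omega$), which gives the continuous embedding $W_0^{1,p}(\Omega)\hookrightarrow L^{p^{*}}(\Omega)$. Because $\Omega$ is bounded, H\"older's inequality yields $\|u\|_{L^{q}(\Omega)}\leq|\Omega|^{\frac1q-\frac1{p^{*}}}\|u\|_{L^{p^{*}}(\Omega)}$ for every $1\leq q\leq p^{*}$, proving the first claim.

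For the compactness when $1\leq q<p^{*}$, I would run a Rellich--Kondrachov argument. Let $\{u_j\}$ be bounded in $W_0^{1,p}(\Omega)$; after passing to a subsequence it converges weakly in $W_0^{1,p}(\Omega)$ by Lemma~\ref{Xuthm} and reflexivity, and, translating, we may assume $u_j\rightharpoonup 0$. The key step is to show $u_j\to 0$ strongly in $L^{1}(\Omega)$. This I would obtain from the Fr\'echet--Kolmogorov criterion: the $L^{1}$ modulus of continuity of $u_j$ with respect to the group translations is controlled by $\|\nabla_{\textrm{H}}u_j\|_{L^{p}(\Omega)}$ via the sub-Riemannian fundamental theorem of calculus along horizontal curves (equivalently, the weak $(1,1)$-Poincar\'e inequality on metric balls of $(\mathbb G,d)$, chained through the ball--box structure), uniformly in $j$; together with the uniform $L^{1}$ bound on the bounded set $\Omega$ this gives precompactness in $L^{1}(\Omega)$, and the weak limit forces the strong limit to be $0$. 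Finally, interpolating,
\begin{equation}
\|u_j\|_{L^{q}(\Omega)}\leq\|u_j\|_{L^{1}(\Omega)}^{1-\theta}\,\|u_j\|_{L^{p^{*}}(\Omega)}^{\theta},\qquad \tfrac1q=(1-\theta)+\tfrac{\theta}{p^{*}},\ \theta\in[0,1),
\end{equation}
and using the uniform $L^{p^{*}}$ bound from the first part, we conclude $u_j\to 0$ in $L^{q}(\Omega)$ for every $q<p^{*}$.

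The main obstacle is the compact embedding, and within it the genuinely subelliptic point: one cannot estimate differences $u(x\cdot h)-u(x)$ by $|h|$ times a full Euclidean gradient, only by the Carnot--Carath\'eodory distance $d(x\cdot h,x)$ times the horizontal gradient, so the translation-continuity estimate must be set up through the CC metric and the doubling/Poincar\'e machinery rather than naively. The cleanest way to package this is to invoke the abstract Rellich--Kondrachov theorem on doubling metric measure spaces supporting a weak $(1,p)$-Poincar\'e inequality \cite[Theorem~8.1]{HK00}, which applies verbatim since $(\mathbb G,d,dx)$ is such a space with the Euclidean-type scaling exponent $\nu$; this is the route indicated by the cited references. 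Everything else --- the density of $C_c^{\infty}(\Omega)$, the H\"older reduction to the range $q\leq p^{*}$, and the interpolation --- is routine.
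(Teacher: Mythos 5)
Your proposal is correct and follows essentially the same route as the paper, which does not prove Lemma~\ref{emb} but simply attributes the continuous embedding to the representation formula $(2.8)$ of \cite{Danielli} together with \cite{F75}, and the compactness to \cite[Theorem~8.1]{HK00} --- precisely the potential estimate plus the doubling/Poincar\'e Rellich--Kondrachov machinery you invoke. Your added details (the H\"older reduction to $q\leq p^{*}$, the interpolation between $L^{1}$ and $L^{p^{*}}$, and the remark that translation continuity must be measured in the Carnot--Carath\'eodory metric) are all sound.
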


Hence, for $1<p<\nu$ we can consider the Sobolev space $W_0^{1,p}(\Omega)$ with the norm
$$
\|f\|:=\|f\|_{W^{1,p}_0(\Omega)}=\|\nabla_{\textrm{H}} f\|_{L^p(\Omega)}.
$$

The following algebraic inequality from \cite[Lemma $2.1$]{Dama} will be useful for us.

\begin{lem}\label{alg}
Let $1<p<\infty$. Then for any $a,b\in\mathbb{R}^N$, there exists a positive constant $C=C(p)$ such that
\begin{equation}\label{algineq}
\langle |a|^{p-2}a-|b|^{p-2}b, a-b \rangle\geq
C(|a|+|b|)^{p-2}|a-b|^2.
\end{equation}
\end{lem}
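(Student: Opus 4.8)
The plan is to prove \eqref{algineq} by integrating the vector field $F(x)=|x|^{p-2}x$ along the segment joining $b$ to $a$, thereby reducing the inequality to a scalar lower bound for $\int_0^1|b+t(a-b)|^{p-2}\,dt$. The estimate is trivial when $a=b$, and both sides are positively homogeneous of degree $p$ under $(a,b)\mapsto(ta,tb)$, so no normalization is really needed; by the symmetry of \eqref{algineq} in $a$ and $b$ I may also assume $|b|\le|a|$ when convenient.

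Note that $F$ is continuous on $\mathbb{R}^N$ for $p>1$ and of class $C^1$ off the origin (indeed globally $C^1$ when $p\ge2$), with
\[
DF(x)=|x|^{p-2}\Bigl(\Id+(p-2)\frac{x\otimes x}{|x|^{2}}\Bigr),\qquad
\langle DF(x)\xi,\xi\rangle=|x|^{p-2}\Bigl(|\xi|^{2}+(p-2)\frac{\langle x,\xi\rangle^{2}}{|x|^{2}}\Bigr).
\]
By Cauchy--Schwarz the bracketed factor lies between $\min\{1,p-1\}|\xi|^{2}$ and $\max\{1,p-1\}|\xi|^{2}$, so $\langle DF(x)\xi,\xi\rangle\ge\min\{1,p-1\}\,|x|^{p-2}|\xi|^{2}$. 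Writing $\gamma(t)=b+t(a-b)$ and applying the fundamental theorem of calculus to $t\mapsto\langle F(\gamma(t)),a-b\rangle$, together with the symmetry of $DF$, gives
\[
\langle F(a)-F(b),a-b\rangle=\int_0^1\langle DF(\gamma(t))(a-b),a-b\rangle\,dt\ \ge\ \min\{1,p-1\}\,|a-b|^{2}\int_0^1|b+t(a-b)|^{p-2}\,dt.
\]
(If the segment $\gamma$ meets the origin one integrates first over $[0,1]$ with a shrinking neighbourhood of the zero of $|\gamma|$ removed and passes to the limit, which is legitimate because $F$ is continuous and $t\mapsto|\gamma(t)|^{p-2}$ is integrable on $(0,1)$ precisely since $p>1$.)

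It remains to bound $\int_0^1|b+t(a-b)|^{p-2}\,dt$ below by $c(p)(|a|+|b|)^{p-2}$, which splits into two cases. If $1<p<2$, then $p-2<0$ and $|b+t(a-b)|\le(1-t)|b|+t|a|\le|a|+|b|$, whence the integrand is $\ge(|a|+|b|)^{p-2}$ and the integral is $\ge(|a|+|b|)^{p-2}$. If $p\ge2$, using $|b|\le|a|$ and the reverse triangle inequality, $|b+t(a-b)|=|(1-t)b+ta|\ge t|a|-(1-t)|b|\ge(2t-1)|a|$ for $t\in[\tfrac12,1]$, so
\[
\int_0^1|b+t(a-b)|^{p-2}\,dt\ \ge\ |a|^{p-2}\int_{1/2}^1(2t-1)^{p-2}\,dt\ =\ \frac{|a|^{p-2}}{2(p-1)}\ \ge\ \frac{(|a|+|b|)^{p-2}}{2^{p-1}(p-1)},
\]
the last inequality because $|a|\ge(|a|+|b|)/2$ and $s\mapsto s^{p-2}$ is nondecreasing for $p\ge2$. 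Combining with the previous display yields \eqref{algineq}, e.g. with $C(p)=\min\{1,p-1\}\cdot\min\{1,\,2^{1-p}/(p-1)\}$ (any such explicit constant is fine). I expect the only genuine obstacle to be the degenerate configuration in the range $1<p<2$ where the segment from $b$ to $a$ passes through the origin and $DF$ is unbounded; this is exactly what the approximation argument above, powered by the integrability $|\gamma|^{p-2}\in L^1(0,1)$ coming from $p>1$, is designed to handle.
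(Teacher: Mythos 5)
Your proof is correct. Note, however, that the paper does not prove Lemma \ref{alg} at all: it simply quotes the inequality from \cite[Lemma 2.1]{Dama}, so there is no in-paper argument to compare against. What you have written is a complete, self-contained proof by the standard integral-representation method (essentially the argument underlying the cited source): differentiate $t\mapsto\langle F(\gamma(t)),a-b\rangle$ with $F(x)=|x|^{p-2}x$ along the segment $\gamma(t)=b+t(a-b)$, use the pointwise ellipticity bound $\langle DF(x)\xi,\xi\rangle\ge\min\{1,p-1\}|x|^{p-2}|\xi|^2$, and then estimate $\int_0^1|\gamma(t)|^{p-2}\,dt$ from below by $c(p)(|a|+|b|)^{p-2}$, splitting into $1<p<2$ (where $s\mapsto s^{p-2}$ is decreasing and the triangle inequality suffices) and $p\ge2$ (where you restrict to $t\in[\tfrac12,1]$ and use $|\gamma(t)|\ge(2t-1)|a|$ with $|b|\le|a|$). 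All the individual steps check out, including the only genuinely delicate point, namely the configuration where the segment crosses the origin for $1<p<2$: there $|\gamma(t)|^{p-2}=|t-t_0|^{p-2}|a-b|^{p-2}$ is integrable precisely because $p>1$, and applying the fundamental theorem of calculus on $[0,t_0-\varepsilon]$ and $[t_0+\varepsilon,1]$ and letting $\varepsilon\to0$ (using $F(\gamma(t_0\pm\varepsilon))\to F(0)=0$ and monotone or dominated convergence for the nonnegative integrand) closes the gap exactly as you indicate. The explicit constant $C(p)=\min\{1,p-1\}\cdot\min\{1,2^{1-p}/(p-1)\}$ is not sharp but is perfectly adequate for the lemma as stated; the payoff of your route over the paper's bare citation is that it makes the dependence of $C$ on $p$ explicit and keeps the paper self-contained.
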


\section{Singular subelliptic $p$-Laplace equations}

The system of basis vectors $X_1,X_2,\dots,X_n$ of the space $V_1$ (here and throughout we set $n_1=n$ and $X_{1i}=X_i$, where
$i=1,\dots,n$) satisfies the H\"ormander's hypoellipticity condition. We study the singular problem in the geometry of the vector fields  satisfying the H\"ormander's hypoellipticity condition.

\begin{defn}\label{wssdef}(Weak solution)
We say that $u\in W_0^{1,p}(\Omega)$ is a weak solution of \eqref{subopsin} if $u>0$ in $\Omega$ and for every $\omega\Subset\Omega$ there exists a positive constant $c(\omega)>0$ such that $u\geq c(\omega)>0$ in $\omega$ and for every $\phi\in C_c^{1}(\Omega)$, we have
\begin{equation}\label{wss}
\int_{\Omega}|\nabla_{\textrm{H}} u|^{p-2}\nabla_{\textrm{H}}u \nabla_{\textrm{H}}\phi\,dx=\int_{\Omega}f(x)u^{-\delta}\phi\,dx.
\end{equation}
\end{defn}

\begin{rem}\label{tstrmk}
First, we claim that if $u\in W_0^{1,p}(\Omega)$ is a weak solution of \eqref{subopsin}, then \eqref{wss} holds for every $\phi\in W_0^{1,p}(\Omega)$. 
Let $\psi\in W_0^{1,p}(\Omega)$, then there exists a sequence of nonnegative functions $\{\psi_n\}_{n\in\mathbb N}\subset C_c^{1}(\Omega)$ such that $0\leq \psi_n\to|\psi|$ strongly in $W_0^{1,p}(\Omega)$ as $n\to\infty$ and pointwise almost everywhere in $\Omega$. We observe that
\begin{equation}\label{unique}
\begin{split}
\Big|\int_{\Omega}f(x)u^{-\delta}\psi\,dx\Big|&\leq \int_{\Omega}f(x)u^{-\delta}|\psi|\,dx\leq\liminf_{n\to\infty}\int_{\Omega}f(x)u^{-\delta}\psi_n\,dx\\
&=\liminf_{n\to\infty}\langle -\dv_{\textrm{H}}(|\nabla_{\textrm{H}}u|^{p-2}\nabla_{\textrm{H}}u),\psi_n\rangle\\
&\leq \|u\|^{p-1}\lim_{n\to\infty}\|\psi_n\|\leq \|u\|^{p-1}\|\psi\|.
\end{split}
\end{equation}
Let $\phi\in W_0^{1,p}(\Omega)$, then there exists a sequence $\{\phi_n\}_{n\in\mathbb N}\subset C_c^{1}(\Omega)$, which converges to $\phi$ strongly in $W_0^{1,p}(\Omega)$. Choosing $\psi=\phi_n-\phi$ in \eqref{unique}, we obtain
\begin{equation}\label{lhslim}
\lim_{n\to\infty}\Big|\int_{\Omega}f(x)u^{-\delta}(\phi_n-\phi)\,dx\Big|\leq \|u\|^{p-1}\lim_{n\to\infty}\|\phi_n-\phi\|=0.
\end{equation}
Again, since $\phi_n\to\phi$ strongly in $W_0^{1,p}(\Omega)$ as $n\to\infty$, we have
\begin{equation}\label{rhslim}
\lim_{n\to\infty}\int_{\Omega}|\nabla_{\textrm{H}}~u|^{p-2}\nabla_{\textrm{H}}~u \nabla_{\textrm{H}}~(\phi_n-\phi)\,dx=0.
\end{equation}
Hence, using \eqref{lhslim} and \eqref{rhslim} in \eqref{wss} the claim follows.
\end{rem}

\textbf{Statement of the main results:} Our main results in this article are stated below. The first one is the following existence, uniqueness and regularity result.
\begin{thm}\label{subopsinex}
Let $0<\delta<1<p<\nu$ and $f\in L^m(\Omega)\setminus\{0\}$ be nonnegative, where $m=(\frac{p^*}{1-\delta})^{'}$. Then the problem \eqref{subopsin} admits a unique positive weak solution $u_\delta$ in $W_0^{1,p}(\Omega)$. Moreover, if $m>\frac{p^*}{p^* -p}$, then $u_{\delta}\in L^\infty(\Omega)$.
\end{thm}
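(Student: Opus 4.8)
The plan is to adapt the Boccardo--Orsina \cite{Boc} approximation method to the subelliptic setting, the main analytic ingredients being the functional framework of Lemma~\ref{Xuthm}, the Sobolev embedding of Lemma~\ref{emb}, the monotonicity inequality of Lemma~\ref{alg}, and the weak Harnack inequality / strong minimum principle for nonnegative subelliptic $p$-supersolutions. First I would regularize the datum: for $n\in\mathbb N$ put $f_n:=\min\{f,n\}$ and solve the approximate problems
\[
-\dv_{\textrm{H}}(|\nabla_{\textrm{H}}u_n|^{p-2}\nabla_{\textrm{H}}u_n)=\frac{f_n(x)}{\big(u_n+\frac1n\big)^{\delta}}\ \text{ in }\Omega,\qquad u_n=0\ \text{ on }\partial\Omega .
\]
Since the datum is bounded by $n^{1+\delta}$ (and is read off $u_n^+$ in the construction, which is legitimate once nonnegativity is known), existence of a weak solution $u_n\in W_0^{1,p}(\Omega)$ follows from the theory of monotone operators, the lower-order term being nondecreasing in $u_n^+$ --- or from direct minimization, since $1-\delta>0$ gives a coercive, weakly lower semicontinuous functional --- using reflexivity from Lemma~\ref{Xuthm} and compactness from Lemma~\ref{emb}. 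Testing with $u_n^-$ gives $u_n\ge0$, and as the datum is nonnegative and $\not\equiv0$ the strong minimum principle yields $u_n>0$ in $\Omega$. Finally, because $f_n\le f_{n+1}$ and $t\mapsto f_n(x)\big(t+\tfrac1n\big)^{-\delta}$ is nonincreasing, $u_{n+1}$ is a supersolution of the $n$-th problem; testing the difference of the two equations with $(u_n-u_{n+1})^+$ and using Lemma~\ref{alg} gives $(u_n-u_{n+1})^+\equiv0$, i.e. $u_n\le u_{n+1}$.

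Next I would establish the uniform bounds. Taking $u_n$ as a test function,
\[
\int_\Omega|\nabla_{\textrm{H}}u_n|^p\,dx=\int_\Omega\frac{f_n u_n}{\big(u_n+\frac1n\big)^{\delta}}\,dx\le\int_\Omega f\,u_n^{1-\delta}\,dx\le\|f\|_{L^m(\Omega)}\,\|u_n\|_{L^{p^*}(\Omega)}^{1-\delta},
\]
where the last step is Hölder's inequality with $m=(p^*/(1-\delta))'$, so that $(1-\delta)m'=p^*$; by the embedding of Lemma~\ref{emb} the right-hand side is at most $C\|f\|_{L^m(\Omega)}\|\nabla_{\textrm{H}}u_n\|_{L^p(\Omega)}^{1-\delta}$, hence $\|\nabla_{\textrm{H}}u_n\|_{L^p(\Omega)}^{\,p-1+\delta}\le C\|f\|_{L^m(\Omega)}$ and $\{u_n\}$ is bounded in $W_0^{1,p}(\Omega)$ uniformly in $n$. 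Moreover, for every $\omega\Subset\Omega$ the weak Harnack inequality applied to the positive supersolution $u_1$ yields a constant $c(\omega)>0$ with $u_n\ge u_1\ge c(\omega)>0$ a.e. in $\omega$ for all $n$.

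The passage to the limit is the step I expect to be the main obstacle, since the singular nonlinearity has to be controlled simultaneously with the nonlinear principal part. Up to a subsequence $u_n\rightharpoonup u$ in $W_0^{1,p}(\Omega)$ and, by the compactness in Lemma~\ref{emb}, $u_n\to u$ in $L^q(\Omega)$ for $q<p^*$ and a.e.; monotonicity then forces the whole sequence to converge with $u=\sup_n u_n$, so $u>0$ in $\Omega$ and $u\ge c(\omega)$ on every $\omega\Subset\Omega$. The key point is strong convergence of the horizontal gradients: since $0\le u_n\le u$, the function $u_n-u\le0$ is admissible in the (non-singular) $n$-th equation with $\int_\Omega f_n\big(u_n+\tfrac1n\big)^{-\delta}(u_n-u)\,dx\le0$, so combining with $\int_\Omega|\nabla_{\textrm{H}}u|^{p-2}\nabla_{\textrm{H}}u\,\nabla_{\textrm{H}}(u_n-u)\,dx\to0$ and Lemma~\ref{alg},
\[
\int_\Omega\big\langle|\nabla_{\textrm{H}}u_n|^{p-2}\nabla_{\textrm{H}}u_n-|\nabla_{\textrm{H}}u|^{p-2}\nabla_{\textrm{H}}u,\ \nabla_{\textrm{H}}(u_n-u)\big\rangle\,dx\longrightarrow0,
\]
whence $\nabla_{\textrm{H}}u_n\to\nabla_{\textrm{H}}u$ in $L^p(\Omega)$ and $|\nabla_{\textrm{H}}u_n|^{p-2}\nabla_{\textrm{H}}u_n\to|\nabla_{\textrm{H}}u|^{p-2}\nabla_{\textrm{H}}u$ in $L^{p'}(\Omega)$. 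For $\phi\in C_c^1(\Omega)$ with $\supp\phi\subset\omega\Subset\Omega$ the left-hand sides of the approximate equations then converge, while on the right $f_n\big(u_n+\tfrac1n\big)^{-\delta}\phi\to f u^{-\delta}\phi$ a.e. and is dominated by $c(\omega)^{-\delta}\|\phi\|_{L^\infty(\Omega)}\,f\in L^1(\omega)$, so dominated convergence applies. Hence $u$ satisfies \eqref{wss} and, together with the local lower bounds, is a weak solution of \eqref{subopsin} in the sense of Definition~\ref{wssdef}; set $u_\delta:=u$.

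Finally, uniqueness and boundedness. If $u,v$ are two weak solutions then, by Remark~\ref{tstrmk}, both equations may be tested with $(u-v)^+\in W_0^{1,p}(\Omega)$; subtracting, the left-hand side is $\ge0$ by Lemma~\ref{alg} while $\int_\Omega f\,(u^{-\delta}-v^{-\delta})(u-v)^+\,dx\le0$ because $t\mapsto t^{-\delta}$ is decreasing, so $(u-v)^+\equiv0$, and by symmetry $u=v$. For boundedness under the stronger hypothesis $m>p^*/(p^*-p)=\nu/p$, I would run a Stampacchia iteration on the approximants: testing the $n$-th equation with $G_k(u_n):=(u_n-k)^+$, $k\ge1$, and using $\big(u_n+\tfrac1n\big)^{-\delta}\le1$ on $\{u_n>k\}$, gives $\int_\Omega|\nabla_{\textrm{H}}G_k(u_n)|^p\,dx\le\int_{\{u_n>k\}}f\,G_k(u_n)\,dx$, and Sobolev (Lemma~\ref{emb}) plus Hölder convert this into
\[
|A_h|\le C\,\|f\|_{L^m(\Omega)}^{\frac{p^*}{p-1}}\,(h-k)^{-p^*}\,|A_k|^{\,\gamma},\qquad h>k\ge1,\quad A_k:=\{u_n>k\},
\]
with $\gamma=\frac{p^*}{p-1}\big(1-\frac1m-\frac1{p^*}\big)$, and $\gamma>1$ exactly because $m>\nu/p$; Stampacchia's lemma then gives $\|u_n\|_{L^\infty(\Omega)}\le C$ uniformly in $n$, and letting $n\to\infty$ yields $u_\delta\in L^\infty(\Omega)$. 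The only genuinely delicate point is the gradient convergence of the third step, which succeeds solely because the monotonicity $u_n\uparrow u$ makes $u_n-u$ a sign-definite admissible test function, sidestepping the issue of testing with limit-adjusted functions near $\partial\Omega$.
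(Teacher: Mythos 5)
Your proposal is correct and follows the same overall Boccardo--Orsina approximation skeleton as the paper (approximate problems with $f_n=\min\{f,n\}$ and the shift $1/n$, monotonicity $u_n\le u_{n+1}$, uniform $W_0^{1,p}$ bound via $\|u_n\|^{p-1+\delta}\le C\|f\|_{L^m}$, uniform local lower bounds from $u_n\ge u_1$, limit passage, uniqueness by testing with $(u-v)^+$, and Stampacchia iteration for $L^\infty$ with the same threshold $m>p^*/(p^*-p)=\nu/p$). You implement two of the technical steps differently, and both implementations work. For the existence of the approximants $u_n$, the paper runs a Schauder fixed-point argument on the map $h\mapsto T(h)$ solving the frozen problem, proving continuity and compactness of $T$ on $L^p(\Omega)$, whereas you invoke monotone operators or direct minimization of the truncated functional; the paper itself later (Lemma \ref{lemma2}) identifies $u_n$ as the minimizer of such a functional $J_n$, so your route simply collapses two of the paper's steps into one and is arguably more economical. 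For the strong convergence $\nabla_{\textrm{H}}u_n\to\nabla_{\textrm{H}}u_\delta$ in $L^p$, the paper argues variationally: the comparison inequality \eqref{prop1} with $\phi=u_\delta$ gives $\|u_n\|\le\|u_\delta\|$, which combined with weak lower semicontinuity and uniform convexity (Lemma \ref{Xuthm}) yields norm convergence and hence strong convergence; you instead test the $n$-th equation with the sign-definite function $u_n-u_\delta\le 0$ and conclude via Lemma \ref{alg}. Your argument is the more standard one for quasilinear problems and avoids the uniform-convexity step, while the paper's variational route has the side benefit of producing the minimizing property of $u_\delta$ for $J_\delta$, which it needs anyway for Theorem \ref{subopbc}. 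The remaining steps (positivity via the strong minimum principle of Vodop'yanov, the a.e.\ and dominated-convergence passage on the singular right-hand side over $\supp\phi\Subset\Omega$, and the exponent bookkeeping $\gamma>1\iff m>\nu/p$) coincide with the paper's.
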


The second main result asserts that the Sobolev inequality \eqref{subopSI} holds and its best constant $\mu(\Omega)$ defined above in \eqref{subopmin} is associated to the problem \eqref{subopsin}.
\begin{thm}\label{subopbc}
Let $0<\delta<1<p<\nu$ and $f\in L^m(\Omega)\setminus\{0\}$ be nonnegative, where $m=(\frac{p^*}{1-\delta})^{'}$. Assume that $u_\delta\in W_0^{1,p}(\Omega)$ is given by Theorem \ref{subopsinex}. Then, we have
\begin{equation}\label{bc}
\begin{split}
\mu(\Omega)&:=\inf_{u\in W_0^{1,p}(\Omega)}\Bigg\{\int_{\Omega}|\nabla_{\textrm{H}}u|^p\,dx:\int_{\Omega}|u|^{1-\delta}f\,dx=1\Bigg\}=\left(\int_{\Omega}|\nabla_{\textrm{H}}u_{\delta}|^p\,dx\right)^\frac{1-\delta-p}{1-\delta}.
\end{split}
\end{equation}
Further, the following Sobolev inequality
\begin{equation}\label{SI}
S\Big(\int_{\Omega}|v|^{1-\delta}f\,dx\Big)^\frac{p}{1-\delta}\leq\int_{\Omega}|\nabla_{\textrm{H}}v|^p\,dx,
\end{equation}
holds for every $v\in W_0^{1,p}(\Omega)$, if and only if $S\leq \mu(\Omega)$.
\end{thm}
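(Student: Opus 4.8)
The plan is to obtain the formula \eqref{bc} by identifying a rescaling of $u_\delta$ with a minimiser of \eqref{subopmin}, and then to deduce the sharp Sobolev inequality \eqref{SI} from the definition of $\mu(\Omega)$ together with the fact that both sides of \eqref{SI} are homogeneous of degree $p$ in $v$. Set $A:=\int_{\Omega}|\nabla_{\textrm{H}}u_\delta|^{p}\,dx$, which is finite and positive since $u_\delta\in W_0^{1,p}(\Omega)$ with $u_\delta>0$. Testing the weak formulation \eqref{wss} with $\phi=u_\delta$, which is legitimate by Remark \ref{tstrmk}, gives the identity $A=\int_{\Omega}f u_\delta^{1-\delta}\,dx$. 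Hence $v_\delta:=A^{-1/(1-\delta)}u_\delta$ satisfies $\int_{\Omega}f|v_\delta|^{1-\delta}\,dx=1$ and $\int_{\Omega}|\nabla_{\textrm{H}}v_\delta|^{p}\,dx=A^{-p/(1-\delta)}A=A^{\frac{1-\delta-p}{1-\delta}}$, so $v_\delta$ is a competitor in \eqref{subopmin} and $\mu(\Omega)\le A^{\frac{1-\delta-p}{1-\delta}}$.

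For the reverse inequality I would show that $u_\delta$ minimises the functional $J(v):=\frac{1}{p}\int_{\Omega}|\nabla_{\textrm{H}}v|^{p}\,dx-\frac{1}{1-\delta}\int_{\Omega}f|v|^{1-\delta}\,dx$ over $W_0^{1,p}(\Omega)$; all integrals in question are finite by Lemma \ref{emb} and H\"older's inequality with exponent $m=(p^{*}/(1-\delta))'$. Since $\big|\nabla_{\textrm{H}}|v|\big|=|\nabla_{\textrm{H}}v|$ a.e.\ one has $J(v)=J(|v|)$, so it suffices to treat $v\ge0$. For such $v$, combine the pointwise convexity estimate $\frac{1}{p}|\nabla_{\textrm{H}}v|^{p}\ge\frac{1}{p}|\nabla_{\textrm{H}}u_\delta|^{p}+|\nabla_{\textrm{H}}u_\delta|^{p-2}\nabla_{\textrm{H}}u_\delta\cdot\nabla_{\textrm{H}}(v-u_\delta)$ with the pointwise concavity estimate $v^{1-\delta}\le u_\delta^{1-\delta}+(1-\delta)u_\delta^{-\delta}(v-u_\delta)$, valid because $u_\delta>0$; integrating and invoking \eqref{wss} (extended as in Remark \ref{tstrmk}) with test function $\phi=v-u_\delta\in W_0^{1,p}(\Omega)$ to cancel the first-order terms yields $J(v)\ge J(u_\delta)=\big(\frac{1}{p}-\frac{1}{1-\delta}\big)A$. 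Now let $u$ be any competitor in \eqref{subopmin} and put $B:=\int_{\Omega}|\nabla_{\textrm{H}}u|^{p}\,dx$; minimising $t\mapsto J(tu)=\frac{t^{p}}{p}B-\frac{t^{1-\delta}}{1-\delta}$ over $t>0$ (the minimiser is $t=B^{-1/(p-1+\delta)}$) gives $\big(\frac{1}{p}-\frac{1}{1-\delta}\big)B^{\frac{\delta-1}{p-1+\delta}}=\min_{t>0}J(tu)\ge\big(\frac{1}{p}-\frac{1}{1-\delta}\big)A$. Since $\frac{1}{p}-\frac{1}{1-\delta}<0$, this forces $B\ge A^{\frac{p-1+\delta}{\delta-1}}=A^{\frac{1-\delta-p}{1-\delta}}$; taking the infimum over $u$ gives $\mu(\Omega)\ge A^{\frac{1-\delta-p}{1-\delta}}$, and \eqref{bc} follows.

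For \eqref{SI}: if $\int_{\Omega}f|v|^{1-\delta}\,dx=0$ the inequality is trivial; otherwise, dividing $v$ by $\big(\int_{\Omega}f|v|^{1-\delta}\,dx\big)^{1/(1-\delta)}$ and using that both sides of \eqref{SI} scale by the same power of the dilation factor, \eqref{SI} for all $v\in W_0^{1,p}(\Omega)$ is equivalent to $S\le\int_{\Omega}|\nabla_{\textrm{H}}w|^{p}\,dx$ for every $w\in W_0^{1,p}(\Omega)$ with $\int_{\Omega}f|w|^{1-\delta}\,dx=1$, i.e.\ to $S\le\mu(\Omega)$ by definition of $\mu(\Omega)$. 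Taking $S=\mu(\Omega)$ and $w=v_\delta$ shows the constant is attained, hence optimal.

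I expect the only delicate point to be the global minimality of $u_\delta$ for $J$: one must justify the manipulations involving the singular weight $u_\delta^{-\delta}$, namely the finiteness of $\int_{\Omega}f u_\delta^{-\delta}(v-u_\delta)\,dx$ and the use of $v-u_\delta$ as a test function in \eqref{wss}, which is precisely what Remark \ref{tstrmk} and the compact embedding of Lemma \ref{emb} provide; the scaling and homogeneity arguments are routine.
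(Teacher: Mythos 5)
Your proposal is correct, and its overall skeleton coincides with the paper's: test \eqref{wss} with $\phi=u_\delta$ to get $\int_\Omega|\nabla_{\textrm{H}}u_\delta|^p\,dx=\int_\Omega fu_\delta^{1-\delta}\,dx$, normalize $u_\delta$ to produce a competitor giving the upper bound for $\mu(\Omega)$, obtain the lower bound from the global minimality of $u_\delta$ for the free energy $J$ together with the one-dimensional optimization $t\mapsto J(tu)$ (your optimal $t=B^{-1/(p-1+\delta)}$ is exactly the paper's scaling factor $\mu=\|v\|^{-p/(p+\delta-1)}$), and settle the iff for \eqref{SI} by homogeneity. The one genuine divergence is how global minimality of $u_\delta$ for $J$ is established. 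The paper imports it from its Lemma \ref{lemma2}: each approximate solution $u_n$ minimizes a regularized functional $J_n$, and minimality survives the passage $n\to\infty$ thanks to the strong convergence $u_n\to u_\delta$ in $W_0^{1,p}(\Omega)$ and monotone/dominated convergence of the potential terms. You instead prove minimality directly from the Euler--Lagrange equation, combining convexity of $\xi\mapsto\frac1p|\xi|^p$ with concavity of $t\mapsto t^{1-\delta}$ and cancelling the first-order terms via \eqref{wss} tested with $\phi=v-u_\delta$; the needed integrability of $\int_\Omega fu_\delta^{-\delta}(v-u_\delta)\,dx$ is exactly what estimate \eqref{unique} in Remark \ref{tstrmk} supplies, and the reduction to $v\ge0$ via $J(v)=J(|v|)$ is harmless. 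Your route is more self-contained for this theorem (it uses only that $u_\delta$ is a weak solution, not the approximation machinery), while the paper's route comes essentially for free since Lemma \ref{lemma2} is already needed to construct $u_\delta$ and to prove the strong $W_0^{1,p}$-convergence used in Theorem \ref{subopsinex}. Both arguments are sound.
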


\section{Preliminary results}
For $n\in\mathbb{N}$, we investigate the following approximated problem
\begin{equation}\label{mainapprox}
\begin{split}
-\dv_{\textrm{H}}(|\nabla_{\textrm{H}}~u|^{p-2}\nabla_{\textrm{H}}~u)=\frac{f_{n}}{(u^{+}+\frac{1}{n})^{\delta}}\text{ in }\Omega,\,u=0\text{ on }\partial\Omega,
\end{split}
\end{equation}
where $f_{n}(x)=\min\{f(x),n\}$, for $f\in L^{m}(\Omega)\setminus\{0\}$ is nonnegative, where $m=(\frac{p^*}{1-\delta})^{'}$, provided $0<\delta<1<p<\nu$ and $p^*=\frac{\nu p}{\nu-p}$.

\begin{lem}\label{auxresult}
Let $1<p<\nu$ and $g\in L^{\infty}(\Omega)\setminus\{0\}$ be nonnegative in $\Omega$. Then there exists a unique solution $u\in W_0^{1,p}(\Omega)$ of the problem
\begin{equation}\label{auxresulteqn}
\begin{split}
-\dv_{\textrm{H}}(|\nabla_{\textrm{H}}~u|^{p-2}\nabla_{\textrm{H}}~u)=g\text{ in }\Omega,\,u>0\text{ in }\Omega.
\end{split}
\end{equation}
Moreover, for every $\omega\Subset\Omega$, there exists a constant $c(\omega)$, satisfying $u\geq c(\omega)>0$ in $\omega$.
\end{lem}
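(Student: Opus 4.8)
The plan is to produce the solution by the direct method of the calculus of variations, to read off uniqueness from the strict monotonicity of the horizontal $p$-Laplacian, and then to upgrade the (a priori only almost everywhere) nonnegativity of $u$ to a quantitative positivity on compactly contained subdomains by means of the subelliptic Harnack inequality.

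\textbf{Existence and uniqueness.} Since $|\Omega|<\infty$ we have $g\in L^{(p^{*})'}(\Omega)$, so by Lemma \ref{emb} and H\"older's inequality the map $v\mapsto\int_{\Omega}gv\,dx$ is a bounded linear functional on $W_0^{1,p}(\Omega)$, hence weakly continuous. I would then consider the energy
\[
J(v)=\frac1p\int_{\Omega}|\nabla_{\textrm{H}}v|^{p}\,dx-\int_{\Omega}gv\,dx,\qquad v\in W_0^{1,p}(\Omega),
\]
which is coercive (using that $\|\nabla_{\textrm{H}}\cdot\|_{L^p(\Omega)}$ is a norm on $W_0^{1,p}(\Omega)$, H\"older's inequality, and $p>1$) and weakly lower semicontinuous (the first term is convex and strongly continuous, the second is weakly continuous). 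Since $W_0^{1,p}(\Omega)$ is reflexive by Lemma \ref{Xuthm}, a minimizing sequence has a weakly convergent subsequence whose limit $u$ minimizes $J$, and the Euler--Lagrange equation of $J$ is exactly the weak formulation
\[
\int_{\Omega}|\nabla_{\textrm{H}}u|^{p-2}\nabla_{\textrm{H}}u\cdot\nabla_{\textrm{H}}\phi\,dx=\int_{\Omega}g\phi\,dx\qquad\text{for all }\phi\in W_0^{1,p}(\Omega).
\]
For uniqueness, given two solutions $u_1,u_2$, subtracting their formulations and testing with $u_1-u_2$ yields $\int_{\Omega}\langle|\nabla_{\textrm{H}}u_1|^{p-2}\nabla_{\textrm{H}}u_1-|\nabla_{\textrm{H}}u_2|^{p-2}\nabla_{\textrm{H}}u_2,\nabla_{\textrm{H}}u_1-\nabla_{\textrm{H}}u_2\rangle\,dx=0$, and Lemma \ref{alg} forces $\nabla_{\textrm{H}}u_1=\nabla_{\textrm{H}}u_2$ a.e., hence $u_1=u_2$. (Equivalently, existence together with uniqueness is immediate from the Browder--Minty theorem applied to the continuous, strictly monotone, coercive operator $\phi\mapsto-\dv_{\textrm{H}}(|\nabla_{\textrm{H}}\phi|^{p-2}\nabla_{\textrm{H}}\phi)$.)

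\textbf{Positivity.} Testing the weak formulation with $\phi=\max\{-u,0\}\in W_0^{1,p}(\Omega)$, and noting that $\nabla_{\textrm{H}}\max\{-u,0\}$ equals $-\nabla_{\textrm{H}}u$ a.e.\ on $\{u<0\}$ and $0$ elsewhere, one obtains $-\int_{\{u<0\}}|\nabla_{\textrm{H}}u|^{p}\,dx=\int_{\Omega}g\,\max\{-u,0\}\,dx\geq0$ since $g\geq0$; hence $\nabla_{\textrm{H}}\max\{-u,0\}=0$, so $\max\{-u,0\}\equiv0$ and $u\geq0$ a.e.\ in $\Omega$. Also $u\not\equiv0$, for otherwise the weak formulation would give $\int_{\Omega}g\phi\,dx=0$ for every $\phi$, contradicting $g\not\equiv0$. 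Since $g\geq0$, the nonnegative function $u$ is a weak supersolution of $-\dv_{\textrm{H}}(|\nabla_{\textrm{H}}u|^{p-2}\nabla_{\textrm{H}}u)=0$ in $\Omega$; invoking the weak Harnack inequality and the strong maximum principle for nonnegative supersolutions of quasilinear subelliptic equations built from the H\"ormander vector fields $X_1,\dots,X_n$ on $\mathbb G$, a nonnegative supersolution on the connected open set $\Omega$ that does not vanish identically is bounded below by a positive constant on every Carnot--Carath\'eodory ball $B_r(x_0)$ with $B_{2r}(x_0)\Subset\Omega$. Covering a given $\omega\Subset\Omega$ by finitely many such balls then produces $c(\omega)>0$ with $u\geq c(\omega)$ a.e.\ in $\omega$, and in particular $u>0$ in $\Omega$.

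\textbf{Main obstacle.} The variational existence--uniqueness and the nonnegativity are routine. The delicate point is the final step: it rests on the subelliptic De Giorgi--Nash--Moser theory --- specifically the weak Harnack inequality and the strong maximum principle for quasilinear operators associated with H\"ormander vector fields on Carnot groups --- which is what turns the a.e.\ bound $u\geq0$ into the quantitative positivity $u\geq c(\omega)>0$ on compact subsets demanded by Definition \ref{wssdef}.
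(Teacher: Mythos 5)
Your proposal is correct and follows essentially the same route as the paper: variational existence via the direct method, uniqueness from the strict monotonicity inequality of Lemma \ref{alg}, nonnegativity by testing with the negative part, and quantitative positivity on $\omega\Subset\Omega$ from the subelliptic weak Harnack/strong maximum principle (the paper cites \cite[Theorem 5]{Vodop} for exactly this step). The only cosmetic differences are that you test uniqueness with $u_1-u_2$ instead of $(u-v)^{+}$ and make the Harnack covering argument explicit.
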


\begin{proof}
\textbf{Existence:} We define the energy functional $I:W_0^{1,p}(\Omega)\to\mathbb{R}$ by
$$
I(u):=\frac{1}{p}\int_{\Omega}|\nabla_{\textrm{H}}u|^p\,dx-\int_{\Omega}g u\,dx.
$$
By using that $g\in L^\infty(\Omega)$ and Lemma \ref{emb}, we have
\begin{equation}\label{coercive}
\begin{split}
I(u)&\geq\frac{\|u\|^p}{p}-C|\Omega|^\frac{p-1}{p}\|g\|_{L^\infty(\Omega)}\|u\|,
\end{split}
\end{equation}
where $C>0$ is the Sobolev constant. Therefore, since $p>1$, we have $I$ is coercive. Moreover, $I$ is weakly lower semicontinuous. Indeed, we define $I=I_1+I_2$, where $I_1:W_0^{1,p}(\Omega)\to\mathbb{R}$ defined by
$$
I_1(u):=\frac{1}{p}\int_{\Omega}|\nabla_{\textrm{H}}u|^p\,dx
$$
and $I_2:W_0^{1,p}(\Omega)\to\mathbb{R}$ defined by
$$
I_2(u):=-\int_{\Omega}g u\,dx.
$$
Note that $I_1, I_2$ are convex and so $I$ is convex. Also, $I$ is a $C^1$ functional. Therefore, $I$ is weakly lower semicontinous. As a consequence $I$ has a minimizer, say $u\in W_0^{1,p}(\Omega)$ which solves the equation
\begin{equation}\label{auxeqn1}
-\dv_{\textrm{H}}(|\nabla_{\textrm{H}}~u|^{p-2}\nabla_{\textrm{H}}~u)=g\text{ in }\Omega.
\end{equation}
\textbf{Positivity:} Choosing $u_-:=\min\{u,0\}$ as a test function in \eqref{auxeqn1} and since $g\geq 0$, we obtain
\begin{equation*}
\begin{split}
\int_{\Omega}|\nabla_{\textrm{H}}u_-|^p\,dx=\int_{\Omega}g u_-\,dx\leq 0,
\end{split}
\end{equation*}
which gives, $u\geq 0$ in $\Omega$. Further $g\neq 0$ gives $u\neq 0$ in $\Omega$. Applying \cite[Theorem $5$]{Vodop} for every $\omega\Subset\Omega$, there exists a constant $c(\omega)$ such that $u\geq c(\omega)>0$ in $\Omega$. Thus $u>0$ in $\Omega$.\\
\textbf{Uniqueness:} Let $u,v\in W_0^{1,p}(\Omega)$ solves the problem \eqref{auxeqn1}. Therefore,
\begin{equation}\label{unif1}
\int_{\Omega}|\nabla_{\textrm{H}}u|^{p-2}\nabla_{\textrm{H}}u \nabla_{\textrm{H}}\phi\,dx=\int_{\Omega}g\phi\,dx,
\end{equation}
and 
\begin{equation}\label{unif22}
\int_{\Omega}|\nabla_{\textrm{H}}v|^{p-2}\nabla_{\textrm{H}}v \nabla_{\textrm{H}}u\phi\,dx=\int_{\Omega}g\phi\,dx
\end{equation}
holds for every $\phi\in W_0^{1,p}(\Omega)$. We choose $\phi=(u-v)^+$ and then subtracting \eqref{unif1} with \eqref{unif22}, we obtain
\begin{equation}\label{unif3}
\int_{\Omega}\langle|\nabla_{\textrm{H}}u|^{p-2}\nabla_{\textrm{H}}u-|\nabla_{\textrm{H}}v|^{p-2}\nabla_{\textrm{H}}v,\nabla_{\textrm{H}}(u-v)^+\rangle\,dx=0.
\end{equation}
By Lemma \ref{alg}, we get $u\leq v$ in $\Omega$. Similarly, we get $v\leq u$ in $\Omega$. Hence the uniqueness follows.
\end{proof}

\begin{lem}\label{exisapprox}
For every $n\in\mathbb{N}$, the problem \eqref{mainapprox} admits a solution $u_{n}\in W_0^{1,p}(\Omega)$ which is positive in $\Omega$. Moreover, $u_n$ is unique and $u_{n+1}\geq u_{n}$ in $\Omega$ for every $n$, and for every $\omega\Subset\Omega$, there exists a constant $c(\omega)$ (independent of $n$) such that $u_{n}\geq c(\omega)>0$ in $\omega$. Further, $\|u_n\|\leq c$, for some positive constant $c$, which is independent of $n$.
\end{lem}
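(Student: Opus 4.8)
The plan is to carry out, in the subelliptic setting, the approximation scheme of Boccardo--Orsina, using Lemma \ref{auxresult} as the basic solvability tool together with Lemma \ref{alg} and the embeddings in Lemma \ref{emb}.

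\textbf{Existence and positivity.} Fix $n$. Since $0\le f_n\le n$ and $(t^++\frac1n)^{-\delta}\le n^\delta$, for any measurable $v$ the function $g_v:=f_n\,(v^++\frac1n)^{-\delta}$ is nonnegative, nonzero and lies in $L^\infty(\Omega)$, so Lemma \ref{auxresult} produces a unique positive $T(v):=w\in W_0^{1,p}(\Omega)$ solving $-\dv_{\textrm{H}}(|\nabla_{\textrm{H}}w|^{p-2}\nabla_{\textrm{H}}w)=g_v$. Testing with $w$ and using Lemma \ref{emb} gives $\|w\|^{p-1}\le Cn^{1+\delta}$ with $C=C(\Omega,p)$ independent of $v$, so $T$ maps the closed ball $\overline B_{R_n}=\{w\in W_0^{1,p}(\Omega):\|w\|\le R_n\}$, $R_n=(Cn^{1+\delta})^{1/(p-1)}$, into itself; by the compactness in Lemma \ref{emb} (and reflexivity, Lemma \ref{Xuthm}) this ball is convex and compact in $L^1(\Omega)$. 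I would then check that $T$ is continuous on $\overline B_{R_n}$ for the $L^1$-topology: if $v_k\to v$ in $L^1(\Omega)$, then along a subsequence $g_{v_k}\to g_v$ a.e. with $|g_{v_k}|\le n^{1+\delta}$, so $g_{v_k}\to g_v$ in $L^{(p^*)'}(\Omega)$, and since $-\dv_{\textrm{H}}(|\nabla_{\textrm{H}}\cdot|^{p-2}\nabla_{\textrm{H}}\cdot)$ is strictly monotone and coercive (Lemma \ref{alg}) its inverse is continuous, whence $T(v_k)\to T(v)$ in $W_0^{1,p}(\Omega)$ and in $L^1(\Omega)$. Schauder's fixed point theorem then yields a fixed point $u_n$, which is a weak solution of \eqref{mainapprox}; applying Lemma \ref{auxresult} to the datum $f_n(u_n^++\frac1n)^{-\delta}$ gives $u_n>0$ in $\Omega$ and local lower bounds $u_n\ge c_n(\omega)>0$. (Equivalently, $u_n$ can be obtained by minimizing the $C^1$, coercive, weakly lower semicontinuous energy $J_n(u)=\frac1p\int_\Omega|\nabla_{\textrm{H}}u|^p\,dx-\int_\Omega G_n(x,u)\,dx$ with $G_n(x,t)=\int_0^t f_n(x)(s^++\frac1n)^{-\delta}\,ds$, as was done for Lemma \ref{auxresult}.)

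\textbf{Uniqueness and monotonicity.} Both follow by testing against truncations. If $u,v$ solve \eqref{mainapprox}, subtracting the weak formulations with test function $(u-v)^+$ makes the left side $\ge 0$ by Lemma \ref{alg}, while on $\{u>v\}$ one has $(u+\frac1n)^{-\delta}-(v+\frac1n)^{-\delta}\le 0$, so the right side is $\le 0$; hence $(u-v)^+=0$, i.e. $u\le v$, and symmetrically $u=v$. For $u_{n+1}\ge u_n$, subtract the equations for $u_n$ and $u_{n+1}$ tested with $(u_n-u_{n+1})^+$; on $\{u_n>u_{n+1}\}$ we have $f_n\le f_{n+1}$ and $u_n+\frac1n>u_{n+1}+\frac1{n+1}$, so
\[
\frac{f_n}{(u_n+\frac1n)^{\delta}}\ \le\ \frac{f_{n+1}}{(u_n+\frac1n)^{\delta}}\ \le\ \frac{f_{n+1}}{(u_{n+1}+\frac1{n+1})^{\delta}},
\]
so the right side is $\le 0$, and Lemma \ref{alg} forces $(u_n-u_{n+1})^+=0$. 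In particular $u_n\ge u_1$ for every $n$, so the positivity constant $c(\omega)$ that Lemma \ref{auxresult} supplies for $u_1$ serves for every $u_n$, i.e. is independent of $n$.

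\textbf{Uniform bound.} Since $u_n\ge 0$, I would use $\phi=u_n$ in the weak formulation of \eqref{mainapprox}; with $(u_n+\frac1n)^{-\delta}\le u_n^{-\delta}$ and $f_n\le f$ this gives
\[
\|u_n\|^p=\int_\Omega\frac{f_n\,u_n}{(u_n+\frac1n)^\delta}\,dx\ \le\ \int_\Omega f\,u_n^{1-\delta}\,dx .
\]
Hölder's inequality with the conjugate exponents $m=(p^*/(1-\delta))'$ and $p^*/(1-\delta)$ — exactly the reason for the choice of $m$ — followed by the Sobolev embedding of Lemma \ref{emb}, gives $\int_\Omega f\,u_n^{1-\delta}\,dx\le\|f\|_{L^m(\Omega)}\|u_n\|_{L^{p^*}(\Omega)}^{1-\delta}\le C\|f\|_{L^m(\Omega)}\|u_n\|^{1-\delta}$. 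Since $p-(1-\delta)=p-1+\delta>0$, we conclude $\|u_n\|\le(C\|f\|_{L^m(\Omega)})^{1/(p-1+\delta)}=:c$, independent of $n$. This last step is the one I expect to be the main obstacle, in the sense that the exponent $m=(p^*/(1-\delta))'$ is dictated precisely so that $f\in L^m(\Omega)$ pairs through Hölder with $u_n^{1-\delta}\in L^{p^*/(1-\delta)}(\Omega)$, and the $n$-independence obtained here is what makes the later passage to the limit in Theorem \ref{subopsinex} possible; the existence step also requires some care in the subelliptic framework, but reduces to Lemma \ref{auxresult}, Lemma \ref{emb} and Lemma \ref{alg}.
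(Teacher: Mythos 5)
Your proposal is correct and follows essentially the same route as the paper: a Schauder fixed-point argument built on Lemma \ref{auxresult} for existence and positivity (the paper runs the same operator $T$ on $L^p(\Omega)$ with a Schaefer-type boundedness set rather than your invariant ball in $L^1$, a cosmetic difference), the identical comparison argument via Lemma \ref{alg} with the test functions $(u-v)^+$ and $(u_n-u_{n+1})^+$ for uniqueness and monotonicity, and the identical H\"older--Sobolev computation with $m=(p^*/(1-\delta))'$ for the uniform bound. No gaps.
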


\begin{proof}
\textbf{Existence:} Let $n\in\mathbb N$, then for every $h\in L^p(\Omega)$, there exists a unique $u\in W_0^{1,p}(\Omega)$ such that
\begin{equation}\label{approxfixed}
-\dv_{\textrm{H}}(|\nabla_{\textrm{H}}u|^{p-2}\nabla_{\textrm{H}}u)=\frac{f_{n}}{(h^{+}+\frac{1}{n})^\delta}\text{ in }\Omega.
\end{equation}
Therefore, we define $T:L^p(\Omega)\to L^p(\Omega)$ by $T(h)=I(u)=u$, where $u$ solves \eqref{approxfixed} and $I:W_0^{1,p}(\Omega)\to L^p(\Omega)$ is the continuous and compact inclusion mapping from Lemma \ref{emb}. First we observe that $T$ is continuous. Indeed, let $\{h_k\}_{k\in\mathbb N}\subset L^p(\Omega)$ and $h\in L^p(\Omega)$ be such that $h_k\to h$ in $L^p(\Omega)$. Suppose $T(h_k)=u_k$ and $T(h)=u$. Then we claim that $u_k\to u$ in $L^p(\Omega)$. By the definition of the mapping $T$, for every $\phi\in W_0^{1,p}(\Omega)$, we have
\begin{equation}\label{T1}
\int_{\Omega}|\nabla_{\textrm{H}} u_k|^{p-2}\nabla_{\textrm{H}} u_k \nabla_{\textrm{H}}\phi\,dx=\int_{\Omega}\frac{f_n}{(h_{k}^+ +\frac{1}{n})^\delta}\phi\,dx
\end{equation}
and
\begin{equation}\label{T2}
\int_{\Omega}|\nabla_{\textrm{H}}u|^{p-2}\nabla_{\textrm{H}}u X\phi\,dx=\int_{\Omega}\frac{f_n}{(h^+ +\frac{1}{n})^\delta}\phi\,dx.
\end{equation}
Let
$$
h_{k,n}=\big((h_{k}^+ +\frac{1}{n})^{-\delta}-(h^+ +\frac{1}{n})^{-\delta}\big).
$$
Then, choosing $\phi=u_k-u$ in \eqref{T1} and \eqref{T2} and then subtracting the resulting equations and using Lemma \ref{alg}, we get
\begin{equation}\label{T3}
\begin{split}
&\int_{\Omega}\langle|\nabla_{\textrm{H}} u_k|^{p-2}\nabla_{\textrm{H}} u_k-|\nabla_{\textrm{H}}u|^{p-2}\nabla_{\textrm{H}}u,\nabla_{\textrm{H}}(u_k-u)\rangle\,dx\\
&=\int_{\Omega}f_n\big((h_{k}^+ +\frac{1}{n})^{-\delta}-(h^+ +\frac{1}{n})^{-\delta}\big)(u_k-u)\,dx
\leq n\int_{\Omega}|h_{k,n}||u_k-u|\,dx\\
&\leq n \|h_{k,n}\|_{L^{(p^*)'}(\Omega)}\|u_k-u\|_{L^{p^*}(\Omega)}\leq C n \|h_{k,n}\|_{L^{(p^*)'}(\Omega)}\|u_k-u\|,
\end{split}
\end{equation}
where $C>0$ is the Sobolev constant. Thus, using Lemma \ref{alg} in \eqref{T3} we have
\begin{equation}\label{T4}
\|u_k-u\|\leq Cn\|h_{k,n}\|^\frac{1}{t-1}_{L^{(p^*)'}(\Omega)},
\end{equation}
where $t=p$ if $p\geq 2$ and $t=2$ if $p<2$. Note that upto a subsequenc, $h_{k,n}\to 0$ as $k\to\infty$ pointwise almost everywhere in $\Omega$ and $|h_{k,n}|\leq 2n^{\delta+1}$ and thus, by the Lebsegue dominated convergence theorem, from \eqref{T4} upto a subsequence
$
\|u_k-u\|\to 0
$ as $k\to\infty$. Since the limit is independent of the choice of the subsequence, by Lemma \ref{emb}, $u_k\to u$ strongly in $L^p(\Omega)$. Therefore, the continuity of $T$ follows. 

To prove that $T$ is compact, choosing $u$ as a test function in \eqref{approxfixed} and by Lemma \ref{emb}, we get
\begin{align*}
\|u\|^p\leq\int_{\Omega}n^{\delta+1}u\,dx\leq n^{\delta+1}|\Omega|^\frac{p-1}{p}C\|u\|,
\end{align*}
where $C>0$ is the Sobolev constant. Thus, 
\begin{equation}\label{T5}
\|u\|\leq c,
\end{equation}
where $c>0$ is a constant independent of $h$. Let $\{h_k\}_{k\in\mathbb N}\subset L^p(\Omega)$ be a bounded sequence, then by \eqref{T5} we have
$$
\|T(h_k)\|\leq c,
$$
where $c>0$ is a constant, which is independent of $h_k$. Thus, by the compactness of the embedding $W_0^{1,p}(\Omega)\to L^p(\Omega)$ from Lemma \ref{emb} we deduce that the sequence $\{T(h_k)\}_{k\in\mathbb N}$ extract a subsequence which converges strongly in $L^p(\Omega)$. Thus, $T$ is compact.

Let us set
$$
S:=\{h\in L^p(\Omega):\lambda T(h)=h,\quad 0\leq\lambda\leq 1\}.
$$
Then for any $h_1,h_2\in S$, by \eqref{T5} we have
$$
\|h_1-h_2\|_{L^p(\Omega)}=\lambda\|T(h_1)-T(h_2)\|_{L^p(\Omega)}\leq c,
$$
for some constant $c>0$, which is independent of $h_1,h_2$. Thus, by Schauder's fixed point theorem, there exists a fixed point $u_n\in W_0^{1,p}(\Omega)$ such that $T(u_n)=u_n$. As a consequence, $u_n$ solves the problem \eqref{mainapprox}. Moreover, by Lemma \ref{auxresult}, we have $u_n>0$ in $\Omega$ and for every $\omega\Subset\Omega$, there exists a constant $c(\omega)$ such that 
\begin{equation}\label{pos}
u_1\geq c(\omega)>0\text{ in }\omega.
\end{equation}
\textbf{Monotonicity and uniqueness:} Choosing $\phi = (u_{n}-u_{n+1})^+$ as a test function in \eqref{mainapprox} we have
\begin{equation}\label{T6}
\begin{split}
J&=\langle|\nabla_{\textrm{H}}\,u_n|^{p-2}\nabla_{\textrm{H}}\,u_n-|\nabla_{\textrm{H}}\,u_{n+1}|^{p-2}\nabla_{\textrm{H}}\,u_{n+1},\nabla_{\textrm{H}}\,(u_{n}-u_{n+1})^{+}\rangle\\
&=\int_{\Omega}\Big\{\frac{f_{n}}{\big(u_{n}+\frac{1}{n}\big)^\delta}-\frac{f_{n+1}}{\big(u_{n+1}+\frac{1}{n+1}\big)^\delta}\Big\}(u_{n}-u_{n+1})^+\,dx.
\end{split}
\end{equation}
Using the inequalities $f_{n}(x) \leq f_{n+1}(x)$, we obtain
\begin{align*}
J&=\int_{\Omega}\Big\{\frac{f_{n}}{\big(u_{n}+\frac{1}{n}\big)^\delta}-\frac{f_{n+1}}{\big(u_{n+1}+\frac{1}{n+1}\big)^\delta}\Big\}(u_{n}-u_{n+1})^+\,dx\\
&\leq\int_{\Omega}f_{n+1}\Big\{\frac{1}{\big(u_{n}+\frac{1}{n}\big)^\delta}-\frac{1}{\big(u_{n+1}+\frac{1}{n+1}\big)^\delta}\Big\}(u_{n}-u_{n+1})^+\,dx\leq 0.
\end{align*}
Noting this fact and using Lemma \ref{alg} in \eqref{T6}, it follows that $u_{n+1}\geq u_{n}$ in $\Omega$. Uniqueness follows similarly. From \eqref{pos}, we know that $u_{1}\geq c(\omega)>0$ for every $\omega\Subset\Omega$. Hence using the monotonicity, for every $\omega\Subset\Omega$, we get $u_{n}\geq c(\omega)>0$ in $\omega$, for some positive constant $c(\omega)$ (independent of $n$).\\
\textbf{Uniform boundedness:} Choosing $u_{n}$ as a test function in \eqref{mainapprox} and by Lemma \ref{emb}, we get
\begin{equation}\label{unif}
\begin{split}
\|u_{n}\|^p\leq\int_{\Omega}f u_{n}^{1-\delta}\,dx&\leq \|f\|_{L^{m}(\Omega)}\Big(\int_{\Omega}u_{n}^{(1-\delta)m'}\,dx\Big)^\frac{1}{m'}=\|f\|_{L^{m}(\Omega)}\Big(\int_{\Omega}u_{n}^{p^{*}}\,dx\Big)^\frac{1-\delta}{p^{*}}\\
&\leq c\|f\|_{L^{m}(\Omega)}\|u_n\|^{1-\delta},
\end{split}
\end{equation}
for some constant $c>0$, independent of $n$. Therefore, we have $\|u_{n}\|\leq c$, for some positive constant $c$ (independent of $n$).
\end{proof}

\begin{rem}\label{ptrmk}
As a consequence of Lemma \ref{exisapprox} we define the pointwise limit $u_\delta$ of $u_n$ in $\Omega$. Note that by the monotonicity of $u_n$ in Lemma \ref{exisapprox} we have $u_\delta\geq u_n$ in $\Omega$ for every $n\in\mathbb N$.
\end{rem}

\begin{lem}\label{essbdd}
Suppose that $f\in L^q(\Omega)\setminus\{0\}$ is nonnegative, where $q>\frac{p^*}{p^{*}-p}$. Then $\|u_n\|_{L^\infty(\Omega)}\leq C$, for some positive constant $C$ independent of $n$, where $\{u_n\}_{n\in\mathbb N}$ is the sequence of solutions of the problem \eqref{mainapprox} given by Lemma \ref{exisapprox}.
\end{lem}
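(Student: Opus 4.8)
The plan is to obtain the uniform $L^{\infty}$ bound by a Stampacchia (De Giorgi) truncation iteration on the super-level sets of $u_n$. The only subelliptic-specific ingredient is the Sobolev embedding of Lemma~\ref{emb}; the key elementary observation is that on the set $\{u_n>1\}$ the singular right-hand side of \eqref{mainapprox} is dominated, \emph{uniformly in $n$}, by $f$ itself, which is precisely what makes the whole estimate $n$-independent.

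\textbf{Step 1: choice of test function and reduction of the right-hand side.} I would fix $k\ge 1$ and test the weak formulation of \eqref{mainapprox} with $\varphi=(u_n-k)^{+}\in W_0^{1,p}(\Omega)$. This is admissible because for each fixed $n$ the datum $f_n(u_n^{+}+\tfrac1n)^{-\delta}$ is bounded by $n^{1+\delta}$, so $u_n$ (built in Lemma~\ref{exisapprox} via Lemma~\ref{auxresult} as the solution of an equation with bounded right-hand side) satisfies \eqref{wss}-type identity against every $W_0^{1,p}(\Omega)$ function. Setting $A_k:=\{x\in\Omega:u_n(x)>k\}$, the left-hand side becomes $\int_\Omega|\nabla_{\textrm H}(u_n-k)^{+}|^p\,dx$. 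On $A_k$ one has $u_n^{+}+\tfrac1n>1$ because $k\ge 1$, hence $(u_n^{+}+\tfrac1n)^{-\delta}<1$ (here $\delta>0$ is used), so
\[
\int_\Omega\frac{f_n}{(u_n^{+}+\frac1n)^{\delta}}(u_n-k)^{+}\,dx\le\int_{A_k}f_n(u_n-k)^{+}\,dx\le\int_{A_k}f(u_n-k)^{+}\,dx,
\]
a bound that no longer sees $n$ or the singularity.

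\textbf{Step 2: H\"older, Sobolev, and the iteration inequality.} Next I would apply H\"older's inequality on $A_k$ with exponents $q$, $p^{*}$ and $r$ given by $\tfrac1q+\tfrac1{p^{*}}+\tfrac1r=1$; the hypothesis $q>\tfrac{p^{*}}{p^{*}-p}$ (which equals $\nu/p$) ensures $\tfrac1q+\tfrac1{p^{*}}<1$, so $r$ exists and $\alpha:=1-\tfrac1q-\tfrac1{p^{*}}>0$. This gives
\[
\int_{A_k}f(u_n-k)^{+}\,dx\le\|f\|_{L^q(\Omega)}\,\|(u_n-k)^{+}\|_{L^{p^{*}}(\Omega)}\,|A_k|^{\alpha}.
\]
Combining with $\|(u_n-k)^{+}\|_{L^{p^{*}}(\Omega)}\le C\|\nabla_{\textrm H}(u_n-k)^{+}\|_{L^p(\Omega)}$ from Lemma~\ref{emb} and absorbing one power of $\|\nabla_{\textrm H}(u_n-k)^{+}\|_{L^p(\Omega)}$ from the identity above, I obtain $\|(u_n-k)^{+}\|_{L^{p^{*}}(\Omega)}\le C\|f\|_{L^q(\Omega)}^{1/(p-1)}|A_k|^{\alpha/(p-1)}$ with $C$ depending only on $p,\nu,|\Omega|$ and the Sobolev constant, hence independent of $n$. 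For $h>k\ge1$, from $A_h\subset A_k$ and $(u_n-k)^{+}>h-k$ on $A_h$ one gets $|A_h|(h-k)^{p^{*}}\le\|(u_n-k)^{+}\|_{L^{p^{*}}(\Omega)}^{p^{*}}$, hence the recursion
\[
|A_h|\le\frac{C\,\|f\|_{L^q(\Omega)}^{p^{*}/(p-1)}}{(h-k)^{p^{*}}}\,|A_k|^{\beta},\qquad \beta:=\frac{\alpha p^{*}}{p-1}.
\]

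\textbf{Step 3: closing the iteration, and the main obstacle.} A short computation shows $\beta>1$ iff $\tfrac1q<\tfrac{p^{*}-p}{p^{*}}$, i.e.\ exactly when $q>\tfrac{p^{*}}{p^{*}-p}$, so the hypothesis is precisely what is needed. With $\beta>1$, Stampacchia's iteration lemma applied starting from the level $k=1$ with $|A_1|\le|\Omega|$ yields $d>0$ depending only on $C$, $\|f\|_{L^q(\Omega)}$, $|\Omega|$, $\beta$, $p^{*}$ — all independent of $n$ — with $|A_{1+d}|=0$, i.e.\ $u_n\le 1+d=:C$ a.e.\ in $\Omega$ for every $n$, which is the claim. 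I expect the only genuinely delicate point to be Step~1, namely arranging control of the singular term on $\{u_n>1\}$ that is uniform in $n$; once that reduction is in place the rest is the classical Stampacchia $L^{\infty}$ estimate for quasilinear equations with right-hand side in $L^q$, $q>\nu/p$, and presents no further difficulty.
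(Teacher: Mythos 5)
Your proposal is correct and follows essentially the same route as the paper: the same test function $(u_n-k)^+$ with $k\ge 1$ to kill the singular factor uniformly in $n$, the same H\"older--Sobolev combination (your single three-exponent H\"older step and direct division by $\|\nabla_{\textrm{H}}(u_n-k)^+\|_{L^p(\Omega)}^{p-1}$ are equivalent to the paper's two successive H\"older applications plus Young's inequality), and the same Stampacchia iteration lemma, with your exponent $\beta=\frac{1}{p-1}\bigl(p^*-1-\frac{p^*}{q}\bigr)$ coinciding exactly with the paper's $\alpha$. No gaps.
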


\begin{proof}
For $k\geq 1$, we define $A(k)=\{x\in\Omega:u_n(x)\geq k\}$. Choosing $\phi_k(x)=(u_n-k)^{+}$ as a test function in \eqref{mainapprox}, first using H\"older's inequality with the exponents $p^{*'}, p^*$ and then, by Young's inequality with exponents $p$ and $p'$, we obtain
\begin{equation}\label{unibdd}
\begin{split}
\|\phi_k\|^p=\int_{\Omega}\frac{f_n}{\big(u_n+\frac{1}{n}\big)^\delta}\phi_k\,dx&\leq\int_{A(k)}f(x)\phi_k\,dx\leq\left(\int_{A(k)}f^{p^{*'}}\,dx\right)^\frac{1}{p^{*'}}\left(\int_{\Omega}\phi_k^{p^*}\,dx\right)^\frac{1}{p^*}\\
&\leq C\left(\int_{A(k)}f^{p^*{'}}\,dx\right)^\frac{1}{p^{*'}}\|\phi_k\|\\
&\leq \epsilon\|\phi_k\|^p+C(\epsilon)\left(\int_{A(k)}f^{p^{*'}}\,dx\right)^\frac{p'}{p^{*'}},
\end{split}
\end{equation}
where $C$ is the Sobolev constant and $C(\epsilon)>0$ is some constant depending on $\epsilon\in(0,1)$ but independent of $n$. Note that $q>\frac{p^*}{p^* -p}$ gives $q>p^{*'}$. Therefore, fixing $\epsilon\in(0,1)$ and again using H\"older's inequality with exponents $\frac{q}{p^{*'}}$ and $\big(\frac{q}{p^{*'}}\big)'$, for some constant $C>0$ which is independent of $n$, we obtain
\begin{equation}\label{supbd1}
\begin{split}
\|\phi_k\|^p\leq C\left(\int_{A(k)}f^{p^{*'}}\,dx\right)^\frac{p'}{p^{*'}}&\leq C\left(\int_{\Omega}f^q\,dx\right)^\frac{p'}{q}|A(k)|^{\frac{p'}{p^{*'}}\frac{1}{\big(\frac{q}{p^{*'}}\big)'}}\\
&\leq C|A(k)|^{\frac{p'}{p^{*'}}\frac{1}{\big(\frac{q}{p^{*'}}\big)'}}.
\end{split}
\end{equation}
Let $h>0$ be such that $1\leq k<h$. Then, $A(h)\subset A(k)$ and for any $x\in A(h)$, we have $u_n(x)\geq h$. So, $u_n(x)-k\geq h-k$ in $A(h)$. Combining these facts along with \eqref{supbd1} and again using Lemma \ref{emb} for some constant $C>0$ (independent of $n$), we arrive at
\begin{align*}
(h-k)^p|A(h)|^\frac{p}{p^*}&\leq\left(\int_{A(h)}(u_n-k)^{p^*}\,dx\right)^\frac{p}{p^{*}}\leq\left(\int_{A(k)}(u_n-k)^{p^*}\,dx\right)^\frac{p}{p^{*}}\\
&\leq C\|\phi_k\|^p\leq C|A(k)|^{\frac{p'}{p^{*'}}\frac{1}{\big(\frac{q}{p^{*'}}\big)'}}.
\end{align*}
Thus, for some constant $C>0$ (independent of $n$), we have
$$
|A(h)|\leq \frac{C}{(h-k)^{p^*}}|A(k)|^{\alpha},
$$
where
$$
\alpha={\frac{p^{*}p'}{pp^{*'}}\frac{1}{\big(\frac{q}{p^{*'}}\big)'}}.
$$
Due to the assumption, $q>\frac{p^*}{p^* -p}$, we have $\alpha>1$. Hence, by \cite[Lemma B.1]{Stam}, we have
$
\|u_n\|_{L^\infty(\Omega)}\leq C,
$
for some positive constant $C>0$ independent of $n$.
\end{proof}

\begin{lem}\label{lemma1}
Let $\{u_n\}_{n\in\mathbb N}$ be the sequence of solutions of the problem \eqref{mainapprox} given by Lemma \ref{exisapprox}. Then, for every $n\in\mathbb N$, we have
\begin{equation}\label{prop1}
\|u_{n}\|^{p}\leq\|\phi\|^{p}+p\int_{\Omega}\frac{(u_{n}-\phi)}{(u_{n}+\frac{1}{n})^\delta}f_{n}\,dx,\quad \forall\phi\in W_{0}^{1,p}(\Omega).
\end{equation}
Moreover, for every $n\in\mathbb{N}$, we have
\begin{equation}\label{nm}
\|u_{n}\|\leq\|u_{n+1}\|.
\end{equation}
\end{lem}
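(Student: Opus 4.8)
The plan is to derive \eqref{prop1} from the convexity of $\xi\mapsto|\xi|^{p}$ combined with the weak formulation of the approximated problem \eqref{mainapprox}, and then to obtain \eqref{nm} simply by inserting $\phi=u_{n+1}$ into \eqref{prop1}.

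First I would record that the right-hand side of \eqref{mainapprox} is bounded: since $0\le f_{n}\le n$ and $u_{n}>0$ in $\Omega$, we have $f_{n}(u_{n}^{+}+\frac1n)^{-\delta}=f_{n}(u_{n}+\frac1n)^{-\delta}\le n^{1+\delta}$, so this datum lies in $L^{\infty}(\Omega)\subset L^{p'}(\Omega)$ as $\Omega$ is bounded. Hence, by density of $C_{c}^{1}(\Omega)$ in $W_{0}^{1,p}(\Omega)$ and the continuity in the $W_{0}^{1,p}$-norm of both sides of the weak formulation of \eqref{mainapprox}, that formulation holds with every test function in $W_{0}^{1,p}(\Omega)$; this is the (considerably easier) analogue of Remark \ref{tstrmk}, easier because here the nonlinearity is bounded. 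In particular, for arbitrary $\phi\in W_{0}^{1,p}(\Omega)$ the choice $\phi-u_{n}$ is admissible and yields
\[
\int_{\Omega}|\nabla_{\textrm{H}}u_{n}|^{p-2}\langle\nabla_{\textrm{H}}u_{n},\nabla_{\textrm{H}}(\phi-u_{n})\rangle\,dx=\int_{\Omega}\frac{f_{n}}{(u_{n}+\frac1n)^{\delta}}(\phi-u_{n})\,dx.
\]

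Next I would use that, for $p>1$, the function $\xi\mapsto|\xi|^{p}$ is convex and $C^{1}$ on $\mathbb{R}^{n}$, which gives the pointwise inequality
\[
|\nabla_{\textrm{H}}\phi|^{p}\ \ge\ |\nabla_{\textrm{H}}u_{n}|^{p}+p\,|\nabla_{\textrm{H}}u_{n}|^{p-2}\langle\nabla_{\textrm{H}}u_{n},\nabla_{\textrm{H}}(\phi-u_{n})\rangle\quad\text{a.e. in }\Omega.
\]
Every term is integrable over $\Omega$ (since $u_{n},\phi\in W_{0}^{1,p}(\Omega)$, and the cross term by H\"older's inequality because $|\nabla_{\textrm{H}}u_{n}|^{p-1}\in L^{p'}(\Omega)$ and $\nabla_{\textrm{H}}(\phi-u_{n})\in L^{p}(\Omega)$), so integrating over $\Omega$ and substituting the identity from the previous paragraph turns the integrated inequality into $\|\phi\|^{p}\ge\|u_{n}\|^{p}+p\int_{\Omega}\frac{f_{n}}{(u_{n}+1/n)^{\delta}}(\phi-u_{n})\,dx$, which is precisely \eqref{prop1} after rearranging.

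Finally, to prove \eqref{nm} I would apply \eqref{prop1} with $\phi=u_{n+1}\in W_{0}^{1,p}(\Omega)$: by the monotonicity $u_{n+1}\ge u_{n}$ in $\Omega$ established in Lemma \ref{exisapprox} and the nonnegativity of $f_{n}$, the term $p\int_{\Omega}\frac{u_{n}-u_{n+1}}{(u_{n}+1/n)^{\delta}}f_{n}\,dx$ is non-positive, whence $\|u_{n}\|^{p}\le\|u_{n+1}\|^{p}$, i.e.\ $\|u_{n}\|\le\|u_{n+1}\|$. I do not anticipate a genuine difficulty in this lemma; the only step requiring a word of justification is the extension of the admissible test functions from $C_{c}^{1}(\Omega)$ to $W_{0}^{1,p}(\Omega)$, and this is immediate here because the approximated datum is bounded.
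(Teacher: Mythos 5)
Your proof is correct and follows essentially the same route as the paper: both arguments rest on the fact that $u_{n}$ minimizes the convex functional $\phi\mapsto\frac{1}{p}\|\phi\|^{p}-\int_{\Omega}f_{n}(u_{n}+\frac{1}{n})^{-\delta}\phi\,dx$, the paper by citing the minimizer characterization from Lemma \ref{auxresult} with $h=u_{n}$, and you by verifying it directly through the first-order convexity inequality for $\xi\mapsto|\xi|^{p}$ combined with the weak formulation. The deduction of \eqref{nm} from \eqref{prop1} with $\phi=u_{n+1}$ is identical in both.
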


\begin{proof}
By Lemma \ref{auxresult}, for any $h\in W_0^{1,p}(\Omega)$, there exists a unique solution $v\in W_0^{1,p}(\Omega,w)$ of the problem
\begin{equation}\label{auxeqn}
-\dv_{\textrm{H}}(|\nabla_{\textrm{H}}u|^{p-2}\nabla_{\textrm{H}}v)=\frac{f_{n}(x)}{(h^{+}+\frac{1}{n})^\delta},\quad v>0\text{ in }\Omega,\quad v=0\text{ on }\partial\Omega.
\end{equation}
Note that $v$ is also a minimizer of the functional $J:W_0^{1,p}(\Omega)\to\mathbb{R}$ defined by
$$
J(\phi):=\frac{1}{p}\|\phi\|^{p}-\int_{\Omega}\frac{f_{n}}{(h^{+}+\frac{1}{n})^\delta}\phi\,dx.
$$
Thus, $J(v)\leq J(\phi)$, for every $\phi\in W_0^{1,p}(\Omega)$ and we obtain
\begin{equation}\label{mineqn}
\frac{1}{p}\|v\|^{p}-\int_{\Omega}\frac{f_{n}}{(h^{+}+\frac{1}{n})^\delta}v\,dx\leq \frac{1}{p}\|\phi\|^{p}-\int_{\Omega}\frac{f_{n}}{(h^{+}+\frac{1}{n})^\delta}\phi\,dx.
\end{equation}
Setting $v=h=u_{n}$ (which is positive in $\Omega$) in \eqref{mineqn}, the estimate \eqref{prop1} follows.\\
By Lemma \ref{exisapprox}, we know $u_{n}\leq u_{n+1}$ for every $n\in\mathbb N$. Thus, choosing $\phi=u_{n+1}$ in \eqref{prop1} we obtain $\|u_{n}\|\leq \|u_{n+1}\|$. Hence the result follows.
\end{proof}

\begin{lem}\label{lemma2}
Let $\{u_n\}_{n\in\mathbb N}$ be the sequence of solutions of the problem \eqref{mainapprox} given by Lemma \ref{exisapprox} and suppose $u_\delta$ is the pointwise limit of $u_n$ given by Remark \ref{ptrmk}. Then, upto a subsequence 
\begin{equation}\label{sc}
u_n\to u_{\delta}\text{ strongly in }W_0^{1,p}(\Omega).
\end{equation}
Moreover, $u_{\delta}$ is a minimizer of the energy functional $J_{\delta}:W_0^{1,p}(\Omega)\to\mathbb{R}$ defined by
\begin{align}\label{eng}
J_{\delta}(v):=\frac{1}{p}\|v\|^p-\frac{1}{1-\delta}\int_{\Omega}(v^{+})^{1-\delta}f\,dx.
\end{align}
\end{lem}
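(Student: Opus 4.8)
The plan is to establish the strong convergence \eqref{sc} first, and then to identify $u_\delta$ as a minimizer of $J_\delta$ by passing to the limit in the inequality \eqref{prop1} and invoking concavity of $t\mapsto t^{1-\delta}$. \textbf{Strong convergence.} Since $\{u_n\}$ is bounded in $W_0^{1,p}(\Omega)$ (Lemma \ref{exisapprox}) and this space is reflexive (Lemma \ref{Xuthm}), a subsequence converges weakly in $W_0^{1,p}(\Omega)$ and, by the compact embedding of Lemma \ref{emb}, strongly in $L^q(\Omega)$, $1\le q<p^*$, and pointwise a.e.; the a.e.\ limit is the pointwise limit $u_\delta$ of Remark \ref{ptrmk}, so $u_\delta\in W_0^{1,p}(\Omega)$ and $u_n\rightharpoonup u_\delta$. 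Because $u_n\le u_\delta$ (Remark \ref{ptrmk}), the function $u_n-u_\delta\in W_0^{1,p}(\Omega)$ is nonpositive, so using it as a test function in \eqref{mainapprox} and recalling $f_n\ge 0$ gives $\int_\Omega|\nabla_{\textrm{H}}u_n|^{p-2}\nabla_{\textrm{H}}u_n\cdot\nabla_{\textrm{H}}(u_n-u_\delta)\,dx\le 0$, while weak convergence forces $\int_\Omega|\nabla_{\textrm{H}}u_\delta|^{p-2}\nabla_{\textrm{H}}u_\delta\cdot\nabla_{\textrm{H}}(u_n-u_\delta)\,dx\to 0$. Subtracting and using Lemma \ref{alg}, which makes the resulting integrand nonnegative, we get $\int_\Omega\langle|\nabla_{\textrm{H}}u_n|^{p-2}\nabla_{\textrm{H}}u_n-|\nabla_{\textrm{H}}u_\delta|^{p-2}\nabla_{\textrm{H}}u_\delta,\nabla_{\textrm{H}}(u_n-u_\delta)\rangle\,dx\to 0$; a second application of \eqref{algineq} (directly if $p\ge 2$, via Hölder's inequality together with the uniform bound $\|u_n\|\le c$ if $1<p<2$) then yields $\nabla_{\textrm{H}}u_n\to\nabla_{\textrm{H}}u_\delta$ in $L^p(\Omega)$, i.e.\ \eqref{sc}. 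In particular $\|u_n\|\to\|u_\delta\|$ (consistent with \eqref{nm}).

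\textbf{Minimization.} First, $J_\delta$ is finite on $W_0^{1,p}(\Omega)$: Hölder's inequality with $m=(p^*/(1-\delta))'$ and Lemma \ref{emb} give $\int_\Omega(v^+)^{1-\delta}f\,dx\le\|f\|_{L^m(\Omega)}\|v\|_{L^{p^*}(\Omega)}^{1-\delta}<\infty$. Given $w\in W_0^{1,p}(\Omega)$, since $\|\nabla_{\textrm{H}}w^+\|_{L^p(\Omega)}\le\|\nabla_{\textrm{H}}w\|_{L^p(\Omega)}$ we have $J_\delta(w)\ge J_\delta(w^+)$, so it suffices to treat $w\ge 0$. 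Also $u_\delta\ge c(\omega)>0$ on each $\omega\Subset\Omega$ by Lemma \ref{exisapprox}, hence $0<u_\delta<\infty$ a.e.\ in $\Omega$ (finiteness because $u_\delta\in L^{p^*}(\Omega)$). Taking $\phi=w$ in \eqref{prop1} and splitting the right-hand integral,
\[
\|u_n\|^p\le\|w\|^p+p\int_\Omega\frac{u_nf_n}{(u_n+\frac1n)^\delta}\,dx-p\int_\Omega\frac{wf_n}{(u_n+\frac1n)^\delta}\,dx .
\]
Since $u_n\uparrow u_\delta$ and $f_n\uparrow f$, dominated convergence (the first integrand is bounded by $(u_\delta^{1-\delta}+1)f\in L^1(\Omega)$) shows the first integral tends to $\int_\Omega u_\delta^{1-\delta}f\,dx$, while Fatou's lemma gives $\liminf_{n\to\infty}\int_\Omega\frac{wf_n}{(u_n+1/n)^\delta}\,dx\ge\int_\Omega wu_\delta^{-\delta}f\,dx$, which is thus finite. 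Passing to the limit, using $\|u_n\|\to\|u_\delta\|$, we obtain
\[
\|u_\delta\|^p\le\|w\|^p+p\int_\Omega u_\delta^{1-\delta}f\,dx-p\int_\Omega wu_\delta^{-\delta}f\,dx .
\]

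\textbf{Conclusion of minimization.} The concavity of $t\mapsto t^{1-\delta}/(1-\delta)$ on $(0,\infty)$ yields the pointwise inequality $\tfrac{w^{1-\delta}}{1-\delta}\le\tfrac{u_\delta^{1-\delta}}{1-\delta}+u_\delta^{-\delta}(w-u_\delta)$; multiplying by $f$ and integrating,
\[
\frac{1}{1-\delta}\int_\Omega w^{1-\delta}f\,dx\le\frac{\delta}{1-\delta}\int_\Omega u_\delta^{1-\delta}f\,dx+\int_\Omega wu_\delta^{-\delta}f\,dx .
\]
Inserting the lower bound for $\int_\Omega wu_\delta^{-\delta}f\,dx$ coming from this line into the previous display and simplifying the coefficients gives $\frac1p\|u_\delta\|^p-\frac1{1-\delta}\int_\Omega u_\delta^{1-\delta}f\,dx\le\frac1p\|w\|^p-\frac1{1-\delta}\int_\Omega w^{1-\delta}f\,dx$, that is $J_\delta(u_\delta)\le J_\delta(w)$. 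Since $w$ was arbitrary and $u_\delta\in W_0^{1,p}(\Omega)$, $u_\delta$ is a minimizer of $J_\delta$.

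\textbf{Main obstacle.} The delicate point is the two limit passages in the singular integrals. On compact subsets of $\Omega$ the factor $(u_n+1/n)^{-\delta}$ is harmless because $u_n\ge c(\omega)>0$ uniformly (Lemma \ref{exisapprox}), but $u_\delta$ may degenerate toward $\partial\Omega$; one must therefore exploit the $L^m$–$L^{p^*}$ Hölder pairing encoded in the choice $m=(p^*/(1-\delta))'$ — which is exactly the borderline exponent — to obtain an $L^1$ dominant for the first integral and to ensure the Fatou bound for the second is not vacuous. The remaining steps are routine.
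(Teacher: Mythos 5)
Your proof is correct, but it takes a genuinely different route from the paper's in both halves. For the strong convergence, the paper tests \eqref{prop1} with $\phi=u_\delta$ to get $\|u_n\|\leq\|u_\delta\|$, combines this with weak lower semicontinuity to conclude $\|u_n\|\to\|u_\delta\|$, and then invokes the uniform convexity of $W_0^{1,p}(\Omega)$ (Lemma \ref{Xuthm}, i.e.\ the Radon--Riesz property); you instead test \eqref{mainapprox} with the nonpositive function $u_n-u_\delta$ and run the standard monotonicity argument based on \eqref{algineq}, which is self-contained (it never uses \eqref{nm} or uniform convexity) and has the extra benefit of giving a.e.\ convergence of the horizontal gradients directly --- a fact the paper needs later in the proof of Theorem \ref{subopsinex} and extracts from \eqref{sc} by a separate subsequence argument. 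For the minimization, the paper introduces the regularized functionals $J_n$ with the auxiliary truncation $H_n$, identifies $u_n$ with the minimizer $v_n$ of $J_n$ via the uniqueness part of Lemma \ref{exisapprox}, and passes to the limit in $J_n(u_n)\leq J_n(v^+)$ by dominated convergence; you instead pass to the limit directly in \eqref{prop1} (dominated convergence on the $u_n$-term, Fatou on the $\phi$-term, both legitimate since $u_n\uparrow u_\delta>0$ a.e.\ and $w\geq 0$) and then convert the resulting ``linearized'' inequality into $J_\delta(u_\delta)\leq J_\delta(w)$ via the tangent-line inequality for the concave function $t\mapsto t^{1-\delta}/(1-\delta)$. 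Your version avoids constructing and analyzing the $J_n$'s altogether, at the price of making Lemma \ref{lemma1} (which the paper uses only for the norm comparison) the engine of the whole minimization step; the coefficient bookkeeping $p+\tfrac{p\delta}{1-\delta}=\tfrac{p}{1-\delta}$ checks out, and the reduction to $w\geq 0$ via $\|\nabla_{\textrm{H}}w^+\|_{L^p(\Omega)}\leq\|\nabla_{\textrm{H}}w\|_{L^p(\Omega)}$ matches the paper's use of $\|v^+\|\leq\|v\|$. Both arguments are sound; yours is arguably more elementary, while the paper's variational identification of $u_n$ as a minimizer of $J_n$ is reusable information in its own right.
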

\begin{proof}
By Remark \ref{ptrmk}, we know that $u_n\leq u_\delta$ in $\Omega$ for every $n\in\mathbb N$. Thus, by choosing $\phi=u_{\delta}$ in \eqref{prop1} we get $\|u_{n}\|\leq \|u_{\delta}\|$ for every $n\in\mathbb N$. Hence, using the norm monotonicity $\|u_n\|\leq \|u_{n+1}\|$ from \eqref{nm}, we have
\begin{equation}\label{lim1}
\lim_{n\to\infty}\|u_{n}\|\leq \|u_{\delta}\|.
\end{equation}
Moreover, since $u_{n}\rightharpoonup u_{\delta}$ weakly in $W_0^{1,p}(\Omega)$, we get
\begin{equation}\label{lim2}
\|u_{\delta}\|\leq \liminf_{n\to\infty}\|u_{n}\|.
\end{equation}
Thus from \eqref{lim1} and \eqref{lim2} along with the uniform convexity of $W_0^{1,p}(\Omega)$ from Lemma \ref{Xuthm}, the convergence in \eqref{sc} follows.

To prove the second result, it is enough to show that, for all $v\in W_0^{1,p}(\Omega)$, we have
\begin{equation}\label{limclm}
J_{\delta}(u_{\delta})\leq J_{\delta}(v).
\end{equation}
To this end, for any $n\in\mathbb N$ and recalling that $f_n(x)=\min\{f(x),n\}$, we define $J_{n}:W_0^{1,p}(\Omega)\to\mathbb{R}$ by
$$
J_{n}(v):=\frac{1}{p}\|v\|^{p}-\int_{\Omega}H_n(v)f_{n}\,dx,
$$
where
$$
H_n(t):=\frac{1}{1-\delta}\Big(t^{+}+\frac{1}{n}\Big)^{1-\delta}-\Big(\frac{1}{n}\Big)^{-\delta}t^{-}.
$$
Then we observe that $J_{n}$ is $C^1$, bounded below and coercive, and therefore, $J_{n}$ has a minimizer, say $v_{n}\in W_0^{1,p}(\Omega)$. This gives, $J_n(v_n)\leq J_n(v_n^{+})$, from which it follows that $v_{n}\geq 0$ in $\Omega$. Noting that $\langle J_{n}{'}(v_{n}),\phi\rangle=0$ for all $\phi\in W_0^{1,p}(\Omega)$, we conclude that $v_{n}$ solves \eqref{mainapprox}. By the uniqueness result from Lemma \ref{exisapprox}, we get $u_{n}=v_{n}$ in $\Omega$. Hence $u_{n}$ is a minimizer of $J_{n}$ and we have
\begin{equation}\label{Inlim}
J_n(u_n)\leq J_n(v^+),\quad\forall\,v\in W_0^{1,p}(\Omega).
\end{equation}
Below we pass to the limit as $n\to\infty$ in \eqref{Inlim} to prove the claim \eqref{limclm}. Indeed, by Remark \ref{ptrmk} we know that $u_{n}\leq u_{\delta}$ in $\Omega$, which along with the Lebesgue dominated convergence theorem gives us
\begin{equation}\label{Inlim1}
\begin{split}
\lim_{n\to\infty}\int_{\Omega}H_n(u_{n})f_{n}\,dx&=\frac{1}{1-\delta}\int_{\Omega}(u_{\delta})^{1-\delta}f\,dx.
\end{split}
\end{equation}
Also, by the strong convergence from \eqref{sc}, we have 
\begin{equation}\label{Inlim2}
\lim_{n\to\infty}\|u_{n}\|= \|u_{\delta}\|.
\end{equation}
Hence, by \eqref{Inlim1} and \eqref{Inlim2}, we have
\begin{equation}\label{Inlim3}
\lim_{n\to\infty}J_{n}(u_{n})=J_{\delta}(u_{\delta}).
\end{equation}
Again,
\begin{equation}\label{Jnlim1}
\begin{split}
\lim_{n\to\infty}\int_{\Omega}H_n(v^{+})f_{n}\,dx&=\frac{1}{1-\delta}\int_{\Omega}(v^{+})^{1-\delta}f\,dx,
\end{split}
\end{equation}
for any $v\in W_0^{1,p}(\Omega)$. Now, letting $n\to\infty$ in \eqref{Inlim} and then using the estimates \eqref{Inlim3}, \eqref{Jnlim1} along with $\|v^{+}\|\leq \|v\|$, the inequality \eqref{limclm} holds. Hence the result follows.
\end{proof}

\section{Proof of the main results}
\textbf{Proof of Theorem \ref{subopsinex}:}\\
\textbf{Uniqueness:} Suppose $u,v\in W_0^{1,p}(\Omega)$ are weak solutions of \eqref{subopsin}. Then by Remark \ref{tstrmk} choosing $\phi=(u-v)^{+}\in W_0^{1,p}(\Omega)$ as a test function in \eqref{wss} we have
\begin{equation}\label{uni1}
\int_{\Omega}|\nabla_{\textrm{H}}u|^{p-2}\nabla_{\textrm{H}}u \nabla_{\textrm{H}}(u-v)^+\,dx=\int_{\Omega}fu^{-\delta}(u-v)^+\,dx,
\end{equation}
\begin{equation}\label{uni2}
\int_{\Omega}|\nabla_{\textrm{H}}u|^{p-2}\nabla_{\textrm{H}}u \nabla_{\textrm{H}}(u-v)^+\,dx=\int_{\Omega}fv^{-\delta}(u-v)^+\,dx.
\end{equation}
Subtracting \eqref{uni1} and \eqref{uni2}, we have
\begin{align*}
\int_{\Omega}\langle|\nabla_{\textrm{H}}u|^{p-2}\nabla_{\textrm{H}}u-|\nabla_{\textrm{H}}v|^{p-2}\nabla_{\textrm{H}}v,\nabla_{\textrm{H}}(u-v)^+\rangle\,dx=\int_{\Omega}f(u^{-\delta}-v^{-\delta})(u-v)^{+}\,dx\leq 0.
\end{align*}
Therefore, by Lemma \ref{alg}, we obtain $u\leq v$ in $\Omega$. In a similar way, we have $v\leq u$ in $\Omega$. Hence the result follows.\\
\textbf{Existence:} For every $n\in\mathbb{N}$, using Lemma \ref{exisapprox}, there exists $u_{n}\in W_0^{1,p}(\Omega)$ such that
\begin{equation}\label{exislim1}
\int_{\Omega}|\nabla_{\textrm{H}} u_n|^{p-2}\nabla_{\textrm{H}}u_n \nabla_{\textrm{H}}\phi\,dx=\int_{\Omega}\frac{f_{n}}{(u_n+\frac{1}{n})^{\delta}}\phi\,dx,\quad\forall\,\phi\in C_c^{1}(\Omega).
\end{equation}
\textbf{Limit pass:} By the strong convergence \eqref{sc} in Lemma \ref{lemma1}, upto a subsequence, we have $\nabla_{\textrm{H}}u_{n}\to \nabla_{\textrm{H}}u_{\delta}$ pointwise almost everywhere in $\Omega$ as $n\to\infty$. Hence, for every $\phi\in C_c^{1}(\Omega)$, we have
\begin{equation}\label{exislim4}
\begin{split}
\lim_{n\to\infty}\int_{\Omega}|\nabla_{\textrm{H}} u_n|^{p-2}\nabla_{\textrm{H}} u_n \nabla_{\textrm{H}}\phi\,dx&=\int_{\Omega}|\nabla_{\textrm{H}} u_\delta|^{p-2}\nabla_{\textrm{H}} u_\delta \nabla_{\textrm{H}}\phi\,dx.
\end{split}
\end{equation}
Let $\mathrm{supp}\,\phi=\omega\Subset\Omega$ and so by Lemma \ref{exisapprox}, there exists a constant $c(\omega)>0$, which is independent of $n$ such that $u_n\geq c(\omega)>0$ in $\omega$. Thus, $u_\delta\geq c(\omega)>0$ in $\omega$ and also we have
\begin{equation*}\label{exislim2}
\frac{f_{n}}{u_{n}^{\delta}}\phi\leq\frac{f}{c(\omega)^{\delta}}\|\phi\|_{L^{\infty}(\Omega)}\in L^1(\Omega).
\end{equation*}
By Remark \ref{ptrmk}, using the pointwise convergence $u_n\to u_{\delta}$ almost everywhere in $\Omega$ and the Lebesgue dominated convergence theorem, we have
\begin{equation}\label{exislim3} \lim_{n\to\infty}\int_{\Omega}\frac{f_{n}}{(u_{n}+\frac{1}{n})^{\delta}}\phi\,dx=\int_{\Omega}\frac{f}{u_{\delta}^{\delta}}\phi\,dx.
\end{equation}
Combining the estimates \eqref{exislim4} and \eqref{exislim3} in \eqref{exislim1}, we obtain $u_\delta$ is a weak solution of \eqref{subopsin}.\\
\textbf{Boundedness:} Using Lemma \ref{essbdd} we have $u_\delta\in L^\infty(\Omega)$.
\qed\\
\textbf{Proof of Theorem \ref{subopbc}:} First, we prove \eqref{bc}. Let us set 
$$
S_{\delta}:=\Bigg\{v\in W_0^{1,p}(\Omega):\int_{\Omega}|v|^{1-\delta}f\,dx=1\Bigg\}.
$$
Thus it is enough to obtain
\begin{align*}
\mu(\Omega)&:=\inf_{v\in S_{\delta}}\|v\|^{p}=\|u_{\delta}\|^{\frac{p(1-\delta-p)}{1-\delta}}.
\end{align*}
We observe that $U_{\delta}=\theta_{\delta} u_{\delta}\in S_{\delta}$, where
$$
\theta_{\delta}=\Bigg(\int_{\Omega}u_{\delta}^{1-\delta}f\,dx\Bigg)^{-\frac{1}{1-\delta}}.
$$
By Remark \ref{tstrmk}, choosing $\phi=u_{\delta}\in W_0^{1,p}(\Omega)$ as a test function in $\eqref{wss}$, we have
\begin{equation}\label{useful1}
\begin{split}
\int_{\Omega}|\nabla_{\textrm{H}} u_\delta|^p\,dx=\|u_{\delta}\|^p=\int_{\Omega}u_{\delta}^{1-\delta}f\,dx.
\end{split}
\end{equation}
Therefore, we have
\begin{equation}\label{extremal}
\begin{split}
\|U_{\delta}\|^{p}&=\int_{\Omega}|\nabla_{\textrm{H}} U_\delta|^p\,dx=\theta_\delta^p\int_{\Omega}|\nabla_{\textrm{H}} u_\delta|^p\,dx=\|u_{\delta}\|^\frac{p(1-\delta-p)}{1-\delta}.
\end{split}
\end{equation}
Let $v\in S_{\delta}$ and define by $\mu=\|v\|^{-\frac{p}{p+\delta-1}}$. Then by Lemma \ref{lemma1}, since $u_{\delta}$ is a minimizer of the functional $J_{\delta}$ given by \eqref{eng}, we have 
\begin{equation}\label{m1}
J_{\delta}(u_\delta)\leq J_{\delta}(\mu|v|).
\end{equation}
Using \eqref{useful1}, we have
\begin{equation}\label{m2}
\begin{split}
J_{\delta}(u_\delta)=\frac{1}{p}\|u_\delta\|^p-\frac{1}{1-\delta}\int_{\Omega}u_\delta^{1-\delta}f\,dx=\Big(\frac{1}{p}-\frac{1}{1-\delta}\Big)\|u_{\delta}\|^{p}.
\end{split}
\end{equation}
Again, since $v\in S_\delta$, we have
\begin{equation}\label{m3}
\begin{split}
J_{\delta}(\mu|v|)&=\frac{\mu^{p}}{p}\||v|\|^{p}-\frac{\mu^{1-\delta}}{1-\delta}\leq\frac{\mu^{p}}{p}\|v\|^{p}-\frac{\mu^{1-\delta}}{1-\delta}=\Big(\frac{1}{p}-\frac{1}{1-\delta}\Big)\|v\|^\frac{p(\delta-1)}{\delta+p-1}.
\end{split}
\end{equation}
Since $v\in S_{\delta}$ is arbitrary, using \eqref{m2} and \eqref{m3} in \eqref{m1}, we obtain
\begin{equation}\label{m4}
\|u_{\delta}\|^\frac{p(1-\delta-p)}{1-\delta}\leq \inf_{v\in S_\delta}\|v\|^{p}.
\end{equation}
Using \eqref{extremal} and \eqref{m4}, we obtain 
\begin{equation}\label{infprop}
\|U_\delta\|^p=\|u_{\delta}\|^\frac{p(1-\delta-p)}{1-\delta}\leq\inf_{v\in S_{\delta}}\|v\|^{p}.
\end{equation}
Since $U_{\delta}\in S_{\delta}$, from \eqref{infprop}, we obtain \eqref{bc}.

To prove the second part, let \eqref{SI} holds. If $S>\mu(\Omega)$, then from \eqref{bc} and \eqref{extremal} above, we obtain
\begin{equation}\label{con1}
S\Big(\int_{\Omega}U_\delta^{1-\delta}f\,dx\Big)^\frac{p}{1-\delta}>\int_{\Omega}|\nabla_{\textrm{H}} U_\delta|^p\,dx.
\end{equation}
Since $U_\delta\in W_0^{1,p}(\Omega)$, \eqref{con1} violates the hypothesis \eqref{SI}. Conversely, let
$$
S\leq \mu(\Omega)=\inf_{v\in S_{\delta}}\|v\|^{p}\leq \|U\|^{p},
$$ for all $U\in S_{\delta}$. We observe that the claim directly follows if $v=0$. So we consider the case of $v\in W_0^{1,p}(\Omega,w)\setminus\{0\}$. This gives
$$
U=\Bigg(\int_{\Omega}|v|^{1-\delta}f\,dx\Bigg)^{-\frac{1}{1-\delta}}v\in S_{\delta}.
$$
Therefore, we have
$$
S\leq\Bigg(\int_{\Omega}|v|^{1-\delta}f\,dx\Bigg)^{-\frac{p}{1-\delta}}\|v\|^p. 
$$
Hence, the result follows. \qed

\vskip 0.3cm

Department of Mathematics, Ben-Gurion University of the Negev, P.O.Box 653, Beer Sheva, 8410501, Israel

\emph{E-mail address:} \email{pgarain92@gmail.com}

\vskip 0.3cm

Department of Mathematics, Ben-Gurion University of the Negev, P.O.Box 653, Beer Sheva, 8410501, Israel

\emph{E-mail address:} \email{ukhlov@math.bgu.ac.il}

\end{document}